\newtheorem{theorem}{Theorem}[section]
\newtheorem{lemma}[theorem]{Lemma}
\theoremstyle{remark}
\theoremstyle{definition}
\newcommand\Nvec[1]{{\bf #1}}
\newcommand\dvec[1]{{#1}}
\DeclareMathOperator*{\myspan}{span}
\newcommand\myD{D}   
\newcommand\myDelta{\Delta}  
\newcommand\myomega{\omega}  
\newcommand\myw{w}  
\begin{document}

\title{An Adaptive Multiscale Approach for Electronic Structure Methods}
\author[1]{Sambasiva Rao Chinnamsetty}
\author[1,2]{Michael Griebel}
\author[2]{Jan Hamaekers}
\affil[1]{{\small Institut f\"ur Numerische Simulation, Universit\"at Bonn, Wegelerstr. 6, D-53115 Bonn}}
\affil[2]{{\small Fraunhofer-Institut f\"ur Algorithmen und Wissenschaftliches Rechnen SCAI, Schloss Birlinghoven, D-53754 Sankt Augustin}}

\maketitle

\begin{abstract}
\noindent
In this paper, we introduce a new scheme for the efficient numerical
treatment of the electronic Schr\"odinger equation for molecules. It
is based on the combination of a many-body expansion, which
corresponds to the bond order dissection Anova approach introduced in
\cite{griebel2014bond,Heber2014}, with a hierarchy of basis sets of
increasing order. Here, the energy is represented as a finite sum of
contributions associated to subsets of nuclei and basis sets in a
telescoping sum like fashion. Under the assumption of data locality of
the electronic density (nearsightedness of electronic matter), the
terms of this expansion decay rapidly and higher terms may be
neglected.  We further extend the approach in a dimension-adaptive
fashion to generate quasi-optimal approximations, i.e.\
a specific truncation of the hierarchical series such that the total
benefit is maximized for a fixed amount of costs. This way, we are
able to achieve substantial speed up factors compared to conventional
first principles methods depending on the molecular system under
consideration. In particular, the method can deal efficiently with
molecular systems which include only a small active part that needs to
be described by accurate but expensive models.
\end{abstract}

\section{Introduction}
The idea of so-called QM/MM hybrid approaches is to combine highly
accurate quantum mechanical (QM) methods and fast molecular mechanics
(MM) methods in a cost efficient manner \cite{andre2014nobel}. Such methods make use of the fact that, in
many applications, it is sufficient to model a small part of a system
in great detail and the rest of the system in less detail only
\cite{senn2009qm,praprotnik2008multiscale}.
Numerically, the microscale with its reactive part is usually treated
with QM approaches like e.g.\ Hartree-Fock (HF), configuration interaction (CI), M\"oller-Plesset
(MP2), coupled cluster (CC) or density functional theory (DFT)
methods which yield approximate solutions to the underlying
quantum-mechanical electronic Schr\"odinger equation. The
mesoscale with its non-reactive part is described by classical MM
methods which use Newton’s mechanics with empirically fitted potential
functions. Here, one of the main challenges is to define the QM region,
the MM region and the interactions between
them~\cite{andre2014nobel}.  The ultimate goal would be a seamless
coupling of QM computations where needed and classical MM simulations
where sufficient. Such approaches are usually referred to as
multiscale methods~\cite{nakano2001multiscale,cances2008molecular,cances2003computational,le2005computational,lubich2008quantum}.\footnote{Note that the 2013 Nobel Prize in chemistry was
  awarded to Karplus, Levitt and Warshel for {\em the development of
    multiscale models for complex chemical systems}
  \cite{andre2014nobel}.}

In this article, we introduce a new method for a seamless coupling of
different models.  In our so-called {\em adaptive multilevel
  BOSSANOVA} approach, we follow the idea of general sparse grids to
combine, on the one hand, an appropriate many-body expansion and, on
the other hand, a hierarchy of models in a cost-efficient manner. The
so-called regular sparse grid combination
technique~\cite{Bungartz.Griebel.Roschke.ea:1996,Griebel.Harbrecht:2014}
is well-known from high-dimensional integration, interpolation and the
solution of elliptic
PDEs~\cite{Gerstner.Griebel:1998,bungartz2004sparse}. More generally,
the construction of a sparse grid can be formulated as a knapsack
problem~\cite{bungartz2004sparse}. Here, the benefit and the cost of
each hierarchical surplus in an appropriate hierarchical
representation is estimated. Then, a quasi-optimal sparse grid
approximation can be
achieved~\cite{bungartz2004sparse,nobile2015convergence} by a proper
truncation of the hierarchical expansion such that the total benefit
is maximized for a given total work load. This procedure can be applied
using a priori estimates or a posteriori estimates, like in dimension-adaptive sparse grid approaches~\cite{gerstner2003dimension}.

After the presentation of our new approach in a general setting, we
discuss the case of the approximation of the Born-Oppenheimer energy
in more detail. To this end, we use the systematic bond order
dissection Anova (BOSSANOVA) algorithm proposed in
\cite{griebel2014bond,Heber2014} and consider a hierarchy of models
which results from the use of an appropriate systematically convergent
hierarchy of one-electron basis sets, like e.g.\ cc-pVDZ, cc-pVTZ,
cc-pVQZ, cc-pV5Z, cc-pV6Z, \ldots, within a specific QM method like
e.g. HF or DFT. We discuss both, a priori truncation schemes and
dimension-adaptive algorithms which are based on a local cost model
and a posteriori local benefit estimators. We apply our new method to
approximate the energy of several molecules, where we are able to
achieve substantial speed up factors compared to the conventional
electronic structure method.  In addition, we apply our new adaptive
approaches to three large molecules, which can not be treated with any
conventional method at all in a reasonable time.

The remaining article is organized as follows.  In
Section~\ref{sec:AddModel} we briefly summarize the basics of the
underlying Schr\"odinger equation and shortly review
  additive models. In Section~\ref{sec:AdaptiveMultilevelBOSSANOVA}
we describe our new multilevel Anova-like decomposition scheme. In
Section~\ref{sec:NumRes} we give numerical results for a broad range
of organic molecules. We conclude with some remarks in Section~\ref{sec:Con}.

\section{Additive model approaches}
\label{sec:AddModel}
In general, any starting point for an approximation or a coupled model
must be the full Schr\"odinger equation for the electrons and nuclei
of the system under consideration. But since the time-dependent
Schr\"odinger equation lives in $3(M+N)+1$ dimensions, where $M$
denotes the number of nuclei and $N$ denotes the number of electrons,
a direct numerical treatment is impossible due to the curse of
dimension. Thus one has to resort to model approximations.  As a first
step, in the Born-Oppenheimer molecular dynamics approach, the wave
functions of the nuclei and electrons are separated, the subsystem of
the nuclei is treated classically with Newton's mechanics and the
remaining $3N$-dimensional electronic Schr\"odinger equation is
further approximated by one of the aforementioned QM methods~\cite{lubich2008quantum}. The
potential needed for Newton's mechanics is obtained from the
electronic solution by the Hellmann-Feynman theorem. Then, the resulting
equations of motion for the degrees of freedom of the nuclei, i.e.\
their positions $\Nvec{R}$, read as follows:
\begin{align}
  m_k\ddot{\Nvec{R}}_k(t) = -\nabla_{\dvec{R}_k}  V^{BO}(\Nvec{Z},\Nvec{R}(t)),
\label{math:NE}
\end{align}
\begin{align}
  \label{math:full-SE}
  V^{BO}({\Nvec{Z}},\Nvec{R}(t)) := \inf_{\|\phi\|=1} \left\{ \int
  \phi^\ast(\Nvec{r}){H}_{e}(\Nvec{Z},\Nvec{R}(t),
  \Nvec{r}) \phi(\Nvec{r})d\Nvec{r}
\right\}.
\end{align}
Here $V^{BO}$ denotes the Born-Oppenheimer ground state energy
and $H_e$ denotes the electronic Hamiltonian which reads
as
\begin{align*}
  {H_e}(\Nvec{Z},\Nvec{R}, \Nvec{r}) &:=  {H_{ee}}(\Nvec{Z}; \Nvec{R}, \Nvec{r})
  +\sum_{i<j}^M\frac{Z_{i}Z_{j}}{\mid\mid
      \dvec{R}_i-\dvec{R}_j\mid\mid},\\
  {H_{ee}}(\Nvec{Z}, \Nvec{R}, \Nvec{r}) &:=
-\frac{1}{2}\sum_{k=1}^N\Delta_{\dvec{r}_k}+ \sum_{k<l}^N\frac{{1}}{\mid\mid
      \dvec{r}_k-\dvec{r}_l\mid\mid}
    \!-\!\sum_{k=1}^N\sum_{j=1}^M\frac{Z_{j}}{\mid\mid
      \dvec{r}_{k}-\dvec{R}_j\mid\mid},
\end{align*}
where $\Nvec{Z}=(Z_1, \ldots, Z_M)$) denotes the atomic
numbers and $\Nvec{r}$ are the positions of the electrons.
Here, all information on the dynamics of
an atomic system is encoded in the associated high-dimensional
Born-Oppenheimer potential $V^{BO}$.  Note that for neutral or
positively charged systems, i.e.\ $N \leq \sum_{j=1}^{M} Z_j$,
Zhislin’s theorem states that $V^{BO}(\Nvec{Z}, \Nvec{R})$ is an
isolated eigenvalue of finite multiplicity of the operator
$H_{ee}(\Nvec{Z}, \Nvec{R}, \Nvec{r})$ for all $\Nvec{R}$
\cite{friesecke2003multiconfiguration}. Note furthermore that
difficulties may in general arise for the Born-Oppenheimer
ground-state molecular dynamics from the fact that
$\nabla_\Nvec{R} V^{BO}$ is unbounded (because of Coulomb
singularities at nuclei cusps) and from the fact
that $\nabla_\Nvec{R} V^{BO}$ might be discontinuous even away from
the positions of Coulomb singularities (because of possible eigenvalue
crossings of the electronic Hamiltonian $H_{ee}$)
\cite{ambrosio2011semiclassical}. In the following, we will omit the
parameter $\Nvec{Z}$ if it is clear from the context.

Let us remark that a global electronic QM solution is, at least for
larger molecules, still too expensive since conventional methods scale at
best with ${O}(M^3)$ due to the underlying problem of matrix
diagonalization. Therefore, specific electronic structure methods
\cite{ Goedecker1999, Skylaris2005, bowler2012mathcal} are employed
which scale linearly and thus overcome this complexity problem. There exist for example divide and conquer
DFT~\cite{yang1995density,shimojo2008divide,wang2014divide} and
partition DFT (PDFT)~\cite{elliott2010partition}. Such approaches are
based on a partition of the global system into embedded spatial local
parts~\cite{huang2011quantum}, which can be solved separately and then
combined to approximate the global solution.  This is also the
underlying idea of various decomposition and fragmentation approaches,
where the full global electronic structure problem is decomposed into
appropriate local subproblems, while the local results are linearly combined to
generate a consistent energy expression for the global
system~\cite{gordon2012fragmentation}. For example, there exist the
sum of interactions between fragments computed ab initio
procedure (SIBFA)~\cite{GreshClaverie1984}, the fragmentation reconstruction
method (FRM)~\cite{Amovilli2002}, the fragment molecular orbital
method (FMO)~\cite{Kitaura1999,komeiji2003fragment,mayhall2011molecules}, additive model
approaches~\cite{Deev2005,Collins2006} and many-body
expansions~\cite{DawBaskes1984,Abell1985,Tersoff1989,bygrave2012embedded,richard2012generalized,richard2014understanding,lao2016understanding,gora2011interaction,dahlke2007electrostatically}. Note
that the linear scaling QM methods and fragmentation based QM methods
all take advantage of a data locality principle which involves the
so-called nearsightedness of electronic
matter~\cite{kohn1996density,prodan2005nearsightedness}.

Although these QM based methods are very powerful, they are still too
costly for many applications, where large systems have to be treated
with high accuracy. Hence, in order to further reduce the costs without
losing much accuracy, one tries to somehow use different levels of
models (e.g.\ HF and CCSD(T)) in a decomposition approach.

For example in {\em incremental methods}, a many-body expansion
\begin{align*}
  E(R_1, \ldots, R_M) = F_\emptyset +
  \sum_{i}F_{\{i\}}(R_i)+\sum_{i<j}F_{\{i,j\}}(R_i, R_j) + \ldots +
  F_{\{1,\ldots,M\}}(R_1, \ldots, R_M),
\end{align*}
can be used to couple different models. Here, the leading
low-order terms in the many body expansion are treated with a {\em
  high} level method, while all higher-order terms are treated at a
{\em low} level model~\cite{wen2012practical}, like e.g.\ the
multilayer hybrid approach~\cite{beran2009approximating}, the
multilevel fragment-based scheme~\cite{rezac2009multilevel} or the
many-body integrated fragmentation technique~\cite{tschumper2006multicentered,bates2011development}.

There exist several methods, which apply different models to different
parts of an atomic system, while combining the results to produce a
consistent energy expression. This is the case for example in the {\em
  conventional QM/MM} methods \cite{field1990combined} and several
variants like e.g.\ the IMOMM ansatz~\cite{MaserasMorokuma1995} and
the ONIOM approach~\cite{Vreven2000}. The common basic idea of most
hybrid QM/MM approaches is to use a sum over two nested regions, like
e.g.\ $\Omega_2$ and $\Omega_1 \subseteq \Omega_2$, to represent the
total energy as
\begin{align}\label{equ:QMMM}
  {E}_{QM}({\Omega_2}) \approx {E}_{MM}({\Omega_2}) +
  \underbrace{({E}_{QM}({\Omega_1}) -
    {E}_{MM}({\Omega_1}))}_{\text{model improvement}}.
\end{align}

While all these hybrid methods and fragmentation
procedures have promising features, they involve stringent chemical
knowledge to choose the regions (or cuts) as best as possible while
keeping the underlying ground-state electronic density intact.

Furthermore, in {\em quantum chemistry composite methods}
\cite{ohlinger2009efficient,deyonker2006correlation} an additive model
approximation is used to couple different levels of models and
different levels of basis sets in a cost-efficient manner. Here, for a
low level method $E_1$, a high level method $E_2$, a small basis set
$B_1$ and a large basis set $B_2$, the energy of the high level method
using the large basis, i.e.\ $E_2(B_2)$, is approximated
by the sum
\begin{align}\label{equ:QCCM}
  E_2(B_2) &\approx
  E_1(B_1)+\underbrace{(E_1(B_2)-E_1(B_1))}_{\text{basis set
      improvement}} + \underbrace{(E_2(B_1)-E_1(B_1))}_{\text{model improvement}}\\\nonumber
  &= E_1(B_2)+ (E_2(B_1)-E_1(B_1)).
\end{align}
This approximation provides in many cases a substantial reduction of
computational costs but results in just a slight degradation of
accuracy~\cite{martin1999ab}.

Let us remark that the additive approximations (\ref{equ:QMMM}) and
(\ref{equ:QCCM}) can be used as basic building blocks which lead in a
recursive fashion formally to the so-called sparse grid combination
technique~\cite{Bungartz.Griebel.Roschke.ea:1996,Griebel.Harbrecht:2014},
which is the basis for our new adaptive multilevel BOSSANOVA approach.

\section{Adaptive multilevel bond order dissection Anova}
\label{sec:AdaptiveMultilevelBOSSANOVA}
First of all, let us note that, in many typical applications, it is
sufficient to accurately describe just a restricted specific subdomain
of the Born-Oppenheimer surface. Therefore, we assume that the error
of the quantity of interest of an approximation $\tilde{V}$ to the
Born-Oppenheimer surface $V^{BO}$ can be described by
$|\mathcal{E}[V^{BO}-\tilde{V}]|$, where $\mathcal{E}$ is an
appropriate linear functional. Examples would be energies,
forces or Hessians at specific coordinates or on specific subdomains.  In
this paper we restrict our numerical experiments to the case of the
energy at a specific fixed configuration $\Nvec{R}$. To this end, one
can define $\mathcal{E}[V]$ just by the convolution of $V$ and the delta
distribution $\delta$, i.e.
\begin{align*}
  \mathcal{E}[V](\Nvec{R}) := \int
  V(\Nvec{R}^\prime)\delta(\Nvec{R}-\Nvec{R}^\prime)\, d\Nvec{R}^\prime = V(\Nvec{R})
\end{align*}

Our new approach is based on the conventional BOSSANOVA
method~\cite{griebel2014bond,Heber2014} and on a hierarchy of models
resulting from sequences of correlation-consistent basis sets that are
well-known in
quantum-chemistry~\cite{dunning1989gaussian,feller1993use}. These
methods are coupled using the idea of the optimized sparse grid
technique by balancing benefit-cost ratios of hierarchical
surpluses, for details, see
\cite{bungartz2004sparse,nobile2015convergence}.

In the following subsections, we will first discuss the so-called
complete basis set limit, then recall the conventional BOSSANOVA
scheme and finally describe our new multilevel and adaptive multilevel
BOSSANOVA schemes. Moreover, we will discuss the estimation of local
benefit and local cost associated to a hierarchical surplus
whose quotient serves as error indicator in the adaptive scheme.

\subsection{Complete basis set limit}
\label{sec:CBS}
Now let us shortly recall the {\em full configuration interaction} (FCI)
method, which corresponds to the application of the Galerkin scheme
for the numerical treatment of the electronic Schr\"odinger equation
(\ref{math:full-SE}). To this end, let $B_1 \subseteq B_2 \subseteq
\ldots$ be a sequence of appropriate finite basis sets of one-electron
functions in $\mathcal{H}^1(\mathbb{R}^3\times\{\pm\tfrac{1}{2}\})$
such that the family $\left\{\myspan\{B_p\}\right\}_{p\in\mathbb{N}}$
is dense in $\mathcal{L}^2(\mathbb{R}^3\times\{\pm\tfrac{1}{2}\})$.\footnote{Except
  for the completion with respect to a chosen Sobolev norm,
  $\sum_{p\in\mathbb{N}}\myspan\{B_p\}$ is just the associated Sobolev
  space $\mathcal{H}^1(\mathbb{R}^3\times\{\pm\tfrac{1}{2}\})$.}
Then, starting from such a family of one-electron basis functions,
Slater determinants (which are generated by the outer anti-symmetric
product $\wedge$) are used to construct a family of $N$-electron bases
functions and an associated dense
family of finite dimensional $N$-electron subspaces
$\{\mathcal{V}_p\}_{\in\mathbb{N}}$ for the $N$-electron space
\begin{align*}
  \mathcal{V} :=
  \mathcal{H}^1(\mathbb{R}^{3}\times\{\pm\tfrac{1}{2}\})^N \cap
  \bigwedge_{i=1}^N
  \mathcal{L}^2(\mathbb{R}^{3}\times\{\pm\tfrac{1}{2}\})^N,
\end{align*}
for details,
see~\cite{griebel2007sparse,Hamaekers:2010,schneider2009analysis}. This
leads to an associated family of potential energy functions
$\{V^{FCI}_{p}\}_{p\in\mathbb{N}}$. Here, for a fixed $\Nvec{R}=(R_1,
\ldots, R_M)$ with pair-wise distinct components,
$V^{FCI}_{p}(\Nvec{R})$ is given as
\begin{align}\label{equ:FCI}
  V^{FCI}_{p}(\Nvec{R}) := \inf_{\phi \in \mathcal{V}_p, \|\phi\|=1} \int \phi(\Nvec{r})H_e(\Nvec{R}; \Nvec{r}) \phi(\Nvec{r})
  d\Nvec{r},
\end{align}
which results from the discretization of $\mathcal{V}$ by
$\mathcal{V}_p$, i.e.\ from a Galerkin-Ritz approximation of the exact
ground-state energy $V^{BO}(\Nvec{R})$ of (\ref{math:full-SE}).

In particular, the FCI method fulfills the Rayleigh-Ritz variational
principle and thus it holds $V^{BO}(\Nvec{R}) \leq
V^{FCI}_{p}(\Nvec{R})$. Furthermore, if the exact lowest eigenvalue
$V^{BO}(\Nvec{R})$ with associated eigenfunction $\phi^{SE}$ has
multiplicity one, then there exist a $p_0\in\mathbb{N}$ and functions
$\phi^{FCI}_p(\Nvec{R})\in\mathcal{V}_p$ which solve the discrete
Galerkin eigenvalue problem (\ref{equ:FCI}) and obey the following
error bounds
\begin{align*}
  \|\phi^{BO}(\Nvec{R})-\phi^{FCI}_p(\Nvec{R})\|_{\mathcal{H}^1} \leq C_1(\Nvec{R}) \inf_{\phi \in \mathcal{V}_p, \|\phi\|=1}\|\phi^{BO}(\Nvec{R})-\phi\|_{\mathcal{H}^1},\\
  0 \leq V^{FCI}_p(\Nvec{R}) - V^{BO}(\Nvec{R}) \leq C_2(\Nvec{R})
  \|\phi^{BO}(\Nvec{R})-\phi^{FCI}_p(\Nvec{R})\|_{\mathcal{H}^1}^2
\end{align*}
for all $p >p_0$, see e.g.\ \cite{griebel2007sparse,
  schneider2009analysis,yserentant2010regularity}, where
$C_1(\Nvec{R})$ and $C_2(\Nvec{R})$ denote constants which are
independent of $p$. Moreover, since $\phi^{BO}$ is at least in
$\mathcal{H}^1$ and fulfills certain decay properties, the use of an
appropriate sequence of bases of sufficient order leads to an error
estimate of type
\begin{align*}
  V_p^{FCI}(\Nvec{R})-V^{BO}(\Nvec{R}) \leq C(\Nvec{R})p^{-2},
\end{align*}
compare e.g.\ \cite{yserentant2010regularity,zeiser2012wavelet}.
In the following, we will assume that,
for all $\epsilon >0$, there exists a $p_\epsilon\in\mathbb{N}$ with
\begin{align*}
  \left|\mathcal{E}[V^{BO}- V^{FCI}_{p_\epsilon}]\right| < \epsilon
\end{align*}
and thus also the convergence of the quantity of interest
$\mathcal{E}[V^{BO}]$ is provided.  Unfortunately, the applicability
of the FCI method is limited by the curse of dimensionality, since the
number of involved degrees of freedom grows exponentially with the
number of electrons $N$. Using the specific regularity of the
electronic eigenfunction and provided that its decay properties are
present, this curse of dimension can be circumvented to a certain
extent by sophisticated sparse grid techniques. But these methods are
still restricted to small systems
\cite{yserentant2010regularity,griebel2010tensor,kreusler2012mixed,yserentant2014regularity}.

Thus, we more generally consider electronic structure methods (ESM)
which are also based on a discretization of the one-electron space
$\mathcal{H}^1(\mathbb{R}^3\times\{\pm\tfrac{1}{2}\})$, but represent
model approximations to the electronic Schr\"odinger equation, like in
particular HF, CI, CC and DFT. Here, an appropriately chosen
family $\{B_p\}_{p\in\mathbb{N}}$ of one-electron basis function sets
again results in a family of associated potential energy functions
$\{V^{ESM}_p\}_{p\in\mathbb{N}}$.  We furthermore assume that the telescopic series
\begin{align*}
  V^{ESM}_{\infty} := \sum_{p=1}^{\infty} \left(V^{ESM}_{p}-V^{ESM}_{p-1}\right),
\end{align*}
is point-wise absolute convergent. Then, we can estimate the error of
an approximation $V^{ESM}_{p}$ by
\begin{align}\label{equ:ltelescopicsum}
  \left|\mathcal{E}[V^{BO}-V^{ESM}_{p}]\right| \leq  \left|\mathcal{E}[V^{BO}-V^{ESM}_{\infty}]\right|+\left|\mathcal{E}[V^{ESM}_{\infty}-V^{ESM}_{p}]\right|,
\end{align}
where
$\left|\mathcal{E}[V^{BO}-V^{ESM}_{\infty}]\right|$
is the model error and
$\left|\mathcal{E}[V^{ESM}_{\infty}-V^{ESM}_{p}]\right|$
is the discretization error, respectively. In the following, we will
just consider the approximation of the so-called complete basis set
limit $V^{ESM}_{\infty}$ for a given electronic structure method.

To this end, let us shortly review complete basis set limit
extrapolation schemes which are well known in computational chemistry
\cite{jensen2013introduction}. For a specific system $\Nvec{R}$ with
pair-wise distinct particle coordinates, the procedure is as follows:
First, a sequence of energies is computed using an appropriate ESM with a
hierarchical sequence of basis sets. Then, these energy values are used
to fit a model equation for the error decay and finally this decay model is applied to
extrapolate to the complete basis set
limit. For example, correlation-consistent
basis sets like e.g.\ cc-pV$n$Z with $n\in\{2, 3, 4, 5, 6,
\ldots\}$ are appropriate.\footnote{Note that $n$ is $2$ for cc-pVDZ, $n$ is $3$ for
  cc-pVTZ, $n$ is $4$ for cc-pVQZ and so on.}
Note that extensive numerical studies
\cite{peterson1994benchmark,halkier1998basis,fast1999infinite,jensen2013introduction}
have demonstrated that in many cases the HF energy converges
asymptotically as $e^{-n}$ and the correlation energy, i.e.\ the
difference between the HF energy and the total energy, converges as $n^{-3}$,
respectively. Here, $n$ is the maximal angular momentum present in the
basis set. Since the total energy is a sum of the HF and the
correlation energy, the extrapolation to the CBS limit $V^{ESM}_\infty(\Nvec{R})=V^{HF}_\infty(\Nvec{R})+V^{corr}_\infty(\Nvec{R})$ can be done
separately for both components \cite{jurevcka2006benchmark}. To this
end, for the HF limit, exponential formulae are popular~\cite{jensen2005estimating}, like e.g.\
\begin{align*}
  V^{HF}_n(\Nvec{R}) = V^{HF}_\infty(\Nvec{R})+A e^{-\alpha n}.
\end{align*}
For the correlation energy, limit formula are common which involve
rational functions~\cite{truhlar1998basis}, like e.g.\
\begin{align*}
  V^{corr}_n(\Nvec{R}) = V^{corr}_\infty(\Nvec{R})+B n^{-\beta},
\end{align*}
often with $\beta=3$. Besides, also other types of formulae are
used for extrapolation which involve, e.g., a mixture of an exponential and a squared
exponential~\cite{peterson2002accurate}
\begin{align*}
  V^{ESM}_n(\Nvec{R}) = V^{ESM}_\infty(\Nvec{R})+a^ne^{-(n-1)}+b^ne^{-(n-1)^2}.
\end{align*}
Analogously to such extrapolation schemes, we assume in the following that
we have a family of potential functions $\{V_p\}_{p\in\mathbb{N}}$
for which it holds
\begin{align*}
  \left|\mathcal{E}[V_{p}-V_{p-1}]\right| \lesssim  g(p)
\end{align*}
for the error of a considered quantity of interest, where the series
$\sum^\infty_pg(p)<\infty$ is absolute convergent.

Note here that $g$ represents an upper limit of the decay behavior of
the so-called hierarchical surplus \cite{bungartz2004sparse} which
comes into play for a telescopic sum expansion of $V_\infty$.
Then, we can estimate the approximation error for $p>0$
from above by
\begin{align*}
  \left|\mathcal{E}[V_p-V_\infty]\right| \lesssim  \sum^\infty_{p^\prime>p}g(p^\prime).
\end{align*}

\subsection{BOSSANOVA}
\label{sec:BOSSANOVA}
Next, let us shortly recall the bond order dissection Anova
(BOSSANOVA) approach as introduced in~\cite{griebel2014bond,Heber2014}.  To this
end, we introduce the notation
\begin{align*}
  V_p(X_1, \ldots, X_M) := V_p(\Nvec{Z},\Nvec{R}), \quad {X}_i := (Z_i, R_i),
\end{align*}
which denotes the energy computed with a specific ESM at level $p$ of
the charge neutral system that consists of $N=\sum^M_i Z_i$ electrons
and $M$ nuclei, each with coordinate vector $R_i \in \mathbb{R}^3$ and
atomic number $Z_i \in \mathbb{N}$. We now decompose the energy $V_p$ into a
multivariate telescopic sum, i.e.\ as a finite series expansion in the
nucleic parameters, in a similar way as in the Anova decomposition.  The
\emph{analysis of variance} (Anova) approach is well-known from
statistics and is closely related to the {\em high-dimensional model
  representation} (HDMR)~\cite{Hayes2006}, the {\em multimode}
approach~\cite{carter1997vibrational} and many-body
expansions~\cite{richard2012generalized}.
This decomposition involves a splitting of the high-dimensional
function into contributions which depend on the positions of single
nuclei and associated charges, of pairs of nuclei and associated
charges, of triples of nuclei and charges, and so on.  Here, we
consider the subset of the nuclei parameters $\{{X}_i\}_{i \in u}$
described by a set of labels $u\subseteq \{1, \ldots, M\}$ with
cardinality $|u|=k$ and call it the \emph{molecular fragment}
associated to $u$ with size $k$.  Then, we consider the many-body
expansion of $V_p$ by
\begin{align}
  V_p({X}_1, \ldots, {X}_M) & =  \myw_{p,\emptyset} + \sum^M_{i_1} \myw_{p,\{i_1\}} ({X}_{\{i_1\}}) + \sum^M_{i_1<i_2} \myw_{p,\{i_1,i_2\}} ({X}_{\{i_1,i_2\}}) + \ldots  \nonumber \\ & \quad \quad \ldots +\myw_{p,\{i_1,\ldots,i_M\}} ({X}_{\{i_1,\ldots,i_M\}}) \nonumber \\
  &=  \sum_{u\subseteq \{1, \ldots, M\}}  \myw_{p,u}({X}_u)\label{equ:Anova_p}
\end{align}
where $X_u$ denotes the set of variables $\{X_i\}_{i \in u}$ and $u
\subseteq \{1,\ldots,M\}$. Each term $\myw_{p,u}$ is
defined by an inclusion-exclusion-type combination of potential
functions that belong to all associated fragments by
\begin{align}\label{equ:Wpu}
  \myw_{p,u}({X}_u) := \sum_{v\subseteq u} (-1)^{|u|-|v|}\widehat{V}_{p,v}({X}_v).
\end{align}
Alternatively it can recursively defined as
\begin{align}\label{equ:Wpu-recursive}
        \myw_{p,u}({X}_u) := \widehat{V}_{p,u}({X}_{u}) - \sum_{k=0}^{|u|-1}\sum_{v\subseteq u, |v|=k} \myw_{p,v}({X}_{v}).
\end{align}
The constant function $\myw_{p,\emptyset}$ is set equal to zero
since it corresponds to the energy of an empty molecular system.
Here, $\widehat{V}_{p,u}({X}_u)$ should be an approximation to the
energy associated with fragment ${X}_u$, where the fixed level $p$
relates to the basis set order used by the ESM, and therefore to the
accuracy it can achieve.
Note at this point that we set
\begin{align}\label{equ:SetVM}
  \widehat{V}_{p,{\{1, \ldots, M\}}}(\Nvec{X}) := V_p(\Nvec{X}),
\end{align}
whereas in general all $\widehat{V}_{p,u}$ with $|u| < M$ could be
chosen arbitrarily since (\ref{equ:Anova_p}) and (\ref{equ:Wpu})
amount to just the identity.  Thus, independent of the specific
definition of $\widehat{V}_{p,u}$ for $|u| < M$, decomposition
(\ref{equ:Anova_p}) is exact and contains $2^M$ different terms due to
the power set construction, i.e.\ $u \subseteq \{1, \ldots, M\}$.

In case of a general splitting it might be that all terms $\myw_{p,u}$
are equally important.  But let us now assume that there is a decay of
the terms $|\mathcal{E}[\myw_{p,u}]|$ with increasing order
$|u|$. Then, a suitable truncation of the sum (\ref{equ:Anova_p})
opens the possibility to avoid the curse of dimensionality.  For
example, if for a small $k<M$ all higher order terms $\myw_{p,u}$ with
$|u|>k$ vanish, only the lower dimensionality of the $k$th order
terms, i.e.\ the effective dimension, exponentially enters the work
count complexity.

It was observed in \cite{griebel2014bond} that, for a proper choice of
the $\widehat{V}_{p,u}$ as good approximation to the ground-state
energy associated to the submolecule $X_u$, the hierarchical surpluses $\myw_{p,u}$ indeed decay for most organic molecules
fast with their
order $k=|u|$. Thus, a proper truncation of the series expansion
(\ref{equ:Anova_p}), e.g.\
\begin{align*}
\sum_{u\subseteq \{1, \ldots, M\}, |u| \leq k }  \myw_{p,u}({X}_u),
\end{align*}
results in a substantial reduction in
computational complexity. We then only have to deal with a sequence of
lower-dimensional subproblems which are associated to the remaining
lower-dimensional energy terms of the decomposition. This leads us to
the following assumption which is central to our further approach: The
associated energy functions $\widehat{V}_{p,u}$ for $|u| < M$ can be
chosen properly such that there is a certain decay in the contribution
of each order $k=|u|$ of the Anova expansion which results in a
monotone convergence of the approximation error with rising
order. Consequently, from a certain order onward, we may neglect the
higher higher-order terms in the Anova decomposition.  Let us remark
that the energy contribution functions $\myw_{p,u}$
in~(\ref{equ:Anova_p}) may be recognized as an expansion of many-body
interaction contributions, as in~\cite{MarxHutter2000}, and thus our
assumption on the decay is also strongly supported by the success of
conventional two- and many-body potential functions used in classical
molecular
dynamics~\cite{Tersoff1989,DawBaskes1984,Brenner2002,Griebel2007a}.

Let us now discuss our specific choice of
$\widehat{V}_{p,u}(X_u)=V_p(X_u)$ according to the BOSS\-ANOVA approach
as presented in \cite{griebel2014bond,Heber2014}, which is aimed at
the approximation of the potential energy surface of a molecule.
A simple choice would be to set $\widehat{V}_{p,u}(X_u)=V_p(X_u)$ for
all $u\subseteq \{1, \ldots, M\}$.  However, the involved cutting of
molecules into fragments may break bonds. Furthermore, a cut-out
fragment may have a total spin unequal zero while the molecular system
itself has a total spin of zero.  This situation would complicate the
proposed linear-scaling ansatz and usually would result in a slow
decay with bond order $k$ of the terms in the many-body expansion
(\ref{equ:Anova_p}). A way to remedy this situation is a saturation of
the dangling bonds of the fragments by adding hydrogen at the places
where bonds were cut, causing the total spin of each augmented
fragment system to be zero. This way, only closed-shell calculations
are performed, which are algorithmically both simpler and more stable.

To be more precise, let $G=(P,E)$ be the graph which is associated to
the organic molecule under consideration and which represents the bond
structure of the molecule.  For reasons of simplicity we assume that
this graph is connected.  Then, a saturation procedure for a
molecular fragment associated to subset $u$ can be described by
$h^G(u)$ additional hydrogen vertices, bonds and their graph-dependent
positions $R^G_i(u)$, $1 \leq i \leq h^G(u)$.  Note here that each
set $u = \{i_1, \ldots , i_{|u|}\}$ of indices of nuclei is
directly associated to an induced subgraph $G_u = ({P}_u, {E}_u)$ of
the total graph $G$ with ${P}_{u} = \{v_{i}\}_{i \in u}$ and $E_{u} =
\{\{v_1, v_2\} \subseteq K \,:\, v_1 \in u, v_2 \in u\}$. Now, for a
subset $u$ with a connected induced subgraph $G_u$, we define a
modified energy function by
\begin{align}\label{equ:saturatedenergy}
  \widehat{V}^{G}_{p,u}({X}_{u}):=V_p\left({X}_{i_1},
  \ldots, {X}_{i_{|u|}}, R^G_1(u), \ldots, R^G_{h^G(u)}(u)\right),
\end{align}
while, according to (\ref{equ:SetVM}), we keep the energy for the
total system unmodified, i.e.\
\begin{align*}
  \widehat{V}^{G}_{p,\{1, \ldots,
  M\}}(\Nvec{X}) := {V}_p(\Nvec{X}).
\end{align*}
This saturation correction is described in detail in
\cite{griebel2014bond,Heber2014}.  In the case of a subset $u$ where
the induced subgraph $G_u$ decomposes into at least two connected
components,, we define the modified energy function
$\widehat{V}^{G}_{p,u}$ as the sum over the modified energy of all
connected components of $G_u$. It follows in particular that the
corresponding hierarchical surplus $\myw_{p,u}$ indeed vanishes. This
is discussed in more detail in Appendix
\ref{sec:AppendixGraph}.\footnote{For example in case of two connected
  (and disjunctive) components $v \cup w = u$ we set
  $\widehat{V}^{G}_{p,u} =
  \widehat{V}^{G}_{p,v}+\widehat{V}^{G}_{p,w}$. Then it follows
  $\myw_{p,u} = \widehat{V}^{G}_{p,v}+\widehat{V}^{G}_{p,w} -
  \widehat{V}^{G}_{p,v} - \widehat{V}^{G}_{p,w} = 0$ by
  Lemma~\ref{lem:Graph}.}  This elimination step is motivated by the
locality of the electronic wave functions: Atoms that share a bond
with a nearby atom will be strongly influenced by changes in the
chemical vicinity of nearest or next-nearest bonding partners whereas
atoms that share no bond to a nearby atom will not.

Altogether, to a given appropriate bond or interaction graph $G$, we
define the conventional BOSSANOVA approximation energy up to order $k$
by
\begin{align}\label{equ:BOSSANOVAENERGY}
  V^{\text{BN}}_{p,k}(\Nvec{X}) := \sum_{k^\prime\leq k} \myD_{p,k}^{G}(\Nvec{X}).
\end{align}
Here, $\myD_{p,k}^{G}$ denotes the hierarchical surplus according to bond order $k$,  which is the sum of
all $\myw_{p,u}^G$ of bond order $k=|u|$ and level $p$, i.e.\
\begin{align}\label{equ:BOSSANOVAENERGY-W}
  \myD_{p,k}^{G}(\Nvec{X}) := \sum_{u\subseteq\{1,\ldots,M\},|u|= k} \myw_{p,u}^{G}({X}_u),
\end{align}
where $\myw_{p,u}^{G}({X}_u)$ is given by (\ref{equ:Wpu}) using the
modified energy functions $\widehat{V}^{G}_{p,u}$ from
(\ref{equ:saturatedenergy}).  Note that in
\cite{griebel2014bond,Heber2014} the conventional BOSSANOVA approach
has been successfully applied to a large range of organic molecules
where in most cases a systematic decay of the size of the decomposition
terms $\myD_{p,k}^G$ with an increase of the order $k=|u|$ has been
observed.  In the following we will omit the parameter $G$ to simplify
notation.

The BOSSANOVA approach is general in the sense that the local
energies can be approximated with any electronic structure method at
hand, e.g.\ HF, CI, CC or MP. This is often referred to as a {\em
  non-intrusive} approach, which means that existing methods and their
implementation can be re-used straightforwardly
without any modifications. Let us finally remark that the
BOSSANOVA method can not be directly applied to metallic systems, since the
necessary assumptions do not hold there, a further
discussion is given in~\cite{Heber2014}.

\subsection{Multilevel BOSSANOVA}
\label{sec:MultBOSSANAOVA}
Now, we describe our new approach to couple the two different
approximation schemes according to (\ref{equ:ltelescopicsum}) and
(\ref{equ:BOSSANOVAENERGY}), respectively. Here, the idea is to
introduce an approximation that is associated to the two corresponding
discretization parameters $p$ and $u$, where for each single parameter, a
systematic improvement of the approximation is expected. Then, we
follow the basic idea of sparse grids~\cite{bungartz2004sparse} and
decompose the associated energy approximation in a telescopic sum like
fashion and finally truncate the expansion in such a way that its
local error and cost contributions are balanced, for details see
\cite{bungartz2004sparse,nobile2015convergence}.

\subsubsection{Hierarchical series expansion}
Up to this point we used the {\em same} basis set of fixed degree $q$
for all BOSSANOVA terms $\myw_{q,u}$. Now we employ electronic structure
methods with basis sets of {\em varying order $p\in\mathbb{N}$} to
approximate the subsystem's ground state energies $\widehat{V}_{p,u}$.
To this end, we further decompose the conventional BOSSANOVA terms
(\ref{equ:Wpu}) with fixed basis set order $q$ as
\begin{align*}
 \myw_{q,u} ({X}_u) = \sum_{p=0}^q\myomega_{p,u}({X}_u),
\end{align*}
where the hierarchized ANOVA terms $\myomega_{p,u}$ are given by
\begin{align}\label{equ:w_pu_contribution}
  \myomega_{p,u} ({X}_u) := \myw_{p,u} ({X}_u)-\myw_{p-1,u}({X}_u),
\end{align}
with $\myw_{-1,u}=0$.
 We further
assume that the series
\begin{align}\label{equ:GammaSeries}
   V_\infty( {X}_1, \ldots, {X}_M) := \sum_{p\in\mathbb{N}}\sum_{ u\subseteq \{1, \ldots, M\}} \myomega_{p,u} ({X}_u)
\end{align}
is point-wise absolute convergent. Then, according to definition
(\ref{equ:SetVM}), it holds $V_\infty=V^{EMS}_\infty$. An example of such a decomposition which
corresponds to a linear molecule of length three is given in Appendix
\ref{sec:LinMolExample}.

Analogously, we can define hierarchical surpluses
\begin{align}\label{equ:w_pk_contribution}
  \myDelta_{p,k} (\Nvec{X}) := \myD_{p,k} (\Nvec{X})-\myD_{p-1,k}(\Nvec{X}),
\end{align}
with $\myD_{p,k}$ from equation (\ref{equ:BOSSANOVAENERGY-W}) for
$p\in\mathbb{N}$ and $\myD_{-1,k}=0$ and obtain an alternative
hierarchical series representation of the complete basis set limit in
the form
\begin{align}\label{equ:GammaSeries_k}
  V_\infty( {X}_1, \ldots, {X}_M) = \sum_{p\in\mathbb{N}} \sum_{0\leq
    k \leq M} \myDelta_{p,k} (\Nvec{X}).
\end{align}
Here, (\ref{equ:GammaSeries}) is a multilevel expansion with
contributions associated to the indices $(p,u) \in \mathbb{N}\times
\mathcal{P}(\{1, \dots, M\})$, while expansion
(\ref{equ:GammaSeries_k}) is a multilevel expansion with contributions
associated to the indices $(p,k) \in \mathbb{N}\times \{0, \dots,
M\}$.

\subsubsection{General multilevel approximation}
Now, we can truncate the infinite sum (\ref{equ:GammaSeries_k}) that
represents the complete basis set limit using a properly chosen
downward-closed\footnote{A set of indices ${I} \subseteq
  \mathbb{N}\times \{0, \ldots, M\}$ is called downward-closed or
  admissible, if for all $(p,k) \in {I}$ it follows $(\max\{p-1,
  0\},k) \in {I}$ and $(p,\max\{k-1,0\}) \in {I}$, see e.g.\
  \cite{gerstner2003dimension}.} index set
${I}\subseteq\mathbb{N}\times \{1 \ldots, M\}$ and obtain
\begin{align}\label{equ:VLambda}
  V_{I}( \Nvec{X}) :=\sum_{(p,k)\in{I}} \myDelta_{p,k} (\Nvec{X})
\end{align}
as an approximation to $V_\infty(\Nvec{X})$. Here, in the most simple case, the index set $I$ can be chosen as
\begin{align}
  \label{equ:ITDLalpha}
  I^{TD}_{L,a} := \left\{ (p,k)\,:\,a p+k \leq L \right\},
\end{align}
i.e.\ as an $\ell_1$-ellipse with scaling parameter $L$ which is related
to the desired accuracy and where $a$ prescribes a certain
weighting between $p$ and $k$.

Alternatively, a more general approximation than (\ref{equ:VLambda})
can be obtained by truncation of the infinite hierarchical series
(\ref{equ:GammaSeries}) using a properly chosen
downward-closed\footnote{A set of indices $\mathcal{I} \subseteq
  \mathbb{N}\times \mathcal{P}(\{1, \ldots, M\})$ is called
  downward-closed or admissible, if for all $(p,u) \in
  \mathcal{I}$ it follows $(\max\{p-1, 0\},u) \in \mathcal{I}$ and
  $(p,v) \in \mathcal{I}$ for all $v\subsetneq u$.} index set $\mathcal{I}
\subseteq \mathbb{N}\times \mathcal{P}(\{1, \ldots, M\})$, i.e.\
\begin{align}\label{equ:VLambda_u}
  V_{\mathcal{I}}( \Nvec{X})
  :=\sum_{(p,u)\in{\mathcal{I}}} \myomega_{p,u} (\Nvec{X}).
\end{align}
Note that the concept of downward-closedness is applicable here, since
the power set is a partially ordered
set. Note furthermore that every approximation (\ref{equ:VLambda}) can
be expressed in form of approximation (\ref{equ:VLambda_u}), but this
does not hold vice versa.

Now, let us shortly discuss the cost involved in the approximations
(\ref{equ:VLambda}) and (\ref{equ:VLambda_u}), respectively.  To this
end, we assume that the local energies $\widehat{V}_{p,u}({X}_{u})$
are computed with a cost of
\begin{align}
  \label{equ:localcostpu}
  c(p,u).
\end{align}  Then,
it is clear that the cost of computing a term $\myomega_{p,u}$ is
$\mathcal{O}(c(p,u))$, because all values $\widehat{V}_{p,u}({X}_{u})$
for $v \subsetneq u$ have been computed before, due to the
downward-closedness of $\mathcal{I}$. Therefore, the total cost of
employing the multilevel BOSSANOVA method (\ref{equ:VLambda_u}) with a
predefined index-set $\mathcal{I}\subseteq \mathbb{N}\times
\mathcal{P}(\{1, \ldots, M\})$ is given by
\begin{align*}
  Cost(\mathcal{I}) = \sum_{(p,u)\in\mathcal{I}} c(p,u).
\end{align*}
Analogously, in case of (\ref{equ:VLambda}) with $I\subseteq
\mathbb{N}\times \{0, \ldots, M\}$ the total cost read as
\begin{align*}
  Cost(I) = \sum_{(p,k)\in I} C(p,k),
\end{align*}
where
\begin{align}
  \label{equ:LocalCost-pk}
  C(p,k) := \sum_{|u|=k}c(p,u).
\end{align}
The advantage of the multilevel BOSSANOVA method stems from neglecting
the energy contributions and thus the costs of the less relevant subproblems.

\subsubsection{Quasi-optimal approximations}
\label{subsucsec:MLBN}
Let us now discuss a proper choice of a finite index sets with respect
to an appropriate benefit-cost setting similar to the sparse grid
construction \cite{bungartz2004sparse}.  Here, for reasons of
simplicity, we will discuss in detail the case of a finite index set
${I}$ in expansion (\ref{equ:VLambda}) only.  The case of a set
$\mathcal{I}$ works analogously. To this end, let us assume that it
holds $V_\infty=V_\infty^{EMS}$ and that
\begin{align}
  \label{equ:Bpk}
  |\mathcal{E}[\myDelta_{p,k}]| \lesssim B(p,k),
\end{align}
where $B:\mathbb{N}\times \{0, \ldots, M\} \to \mathbb{R}_+$ denotes
an upper bound of $|\mathcal{E}[\myDelta_{p,k}]|$, which we call the {\em
  local benefit}.  Then, we obtain the upper estimate
\begin{align}\label{equ:MBO-Total-Error}
  |\mathcal{E}[V_\infty^{EMS}-V_{I}]| = |\mathcal{E}[\sum_{(p,k)\in\mathbb{N}\times \{0, \ldots, M\}\setminus{I}}\myDelta_{p,k}]| \lesssim\sum_{(p,k)\in\mathbb{N}\times \{0, \ldots, M\}\setminus{I}} B(p,k).
\end{align}
Let us further assume that the cost contribution associated with a
hierarchical surplus $\myDelta_{p,k}$ can be estimated from above by an
appropriate {\em local cost} function $C:\mathbb{N}\times \{1, \ldots, M\}
\to \mathbb{R}_+$, compare (\ref{equ:LocalCost-pk}).

Now, we assume that we are allowed to spend just a limited total
specific cost. Then, the goal is to determine an index set (with at
most this associated total cost) such that the overall benefit is
maximized. The corresponding associated binary knapsack problem
consists in the determination of an index set ${I}_{\text{opt}}\subset
\mathbb{N}\times \{0, \ldots, M\}$ that maximizes the total benefit
under the constraint of a maximally allowed total cost $C_{max}$,
i.e.\
\begin{align}
  \label{equ:KnapsackBC}
  \max_{{I}}\sum_{(p,k)\in{I}}B(p,k)\quad \text{with} \quad  \sum_{(p,k)\in{I}}C(p,k) \leq C_{\text{max}}.
\end{align}
Its solution can be reduced to the discussion of the local
benefit-cost ratios $\frac{B(p,k)}{C(p,k)}$. Those indices $(p,k)$
with the largest benefit-cost ratios are taken into account first.
This is similar in spirit to the best $n$-term approximation. In the
framework of Sobolev spaces of dominating mixed smoothness such a
construction leads e.g.\ to quasi-optimal sparse grids. For a more
detailed discussion see e.g.\
\cite{bungartz2004sparse,gerstner2003dimension,nobile2015convergence}.
Note that the described construction is called {\em
  quasi-optimal}~\cite{nobile2015convergence}, since, amongst other
issues, the estimate of the total error (\ref{equ:MBO-Total-Error})
involves the triangle inequality\ only but no norm-equivalency.

Analogously to (\ref{equ:KnapsackBC}), the problem of the
determination of an index set $\mathcal{I}_{\text{opt}}\subset
\mathbb{N}\times \mathcal{P}(\{1, \ldots, M\})$ that maximizes the
total benefit $\sum_{(p,u)\in{\mathcal{I}_{\text{opt}}}}b(p,u)$
under the constraint of maximal allowed total cost
$\sum_{(p,u)\in{\mathcal{I}_{\text{opt}}}}c(p,u) \leq C_{\text{max}}$ reduces to
the discussion of local benefit-cost ratios $\frac{b(p,u)}{c(p,u)}$.

\subsubsection{Local cost model}
\label{sec:LocalCost}

In electronic structure calculations with standard correlation
consistent cc-pV$n$Z basis sets, the number of one-electron basis
functions per atom scales as
\cite{helgaker1997basis,halkier1998basis,truhlar1998basis}
\begin{eqnarray}
\label{nofbasefunc}
N_{\text{cc-pV$n$Z}} = \tfrac{1}{3}(n+1)(n+\tfrac{3}{2})(n+2).
\end{eqnarray}
Roughly speaking, the number of basis functions in the cc-pV$n$Z basis sets
grows with third order, i.e.\ $n^3$. In this article, we consider a
hierarchy of models which results from the basis sets cc-pV$n$Z
with $n\in\{3, 4, 5, 6, \ldots\}$ of Dunning et
al. \cite{dunning1989gaussian,wilson1996gaussian}. Here, cc-pVTZ
corresponds to the energy level $p=0$, cc-pVQZ corresponds to the
energy level $p=1$, cc-pV5Z corresponds to the energy level $p=2$ and
cc-pV6Z corresponds to the energy level $p=3$. Higher values of $p$
correspond to better quality.

Furthermore, we assume that the cost of the applied electronic
structure method, e.g.\ HF and DFT, scales with third order in the
number of one-electron basis functions.  Hence, we
estimate the cost of a single calculation for an $M$-atomic molecule
by
\begin{align*}
  Cost(M,n) \lesssim M^3n^{9},
\end{align*}
in case of HF and DFT, respectively.  Note that there are several
energy evaluations involved in the computation of a hierarchical
surplus $\myomega_{p,u}$. However, as already noted in the previous sections,
we assume that the computations for all backward neighbors in
definition (\ref{equ:w_pu_contribution}) have already been performed
and their results have been stored by a simple book-keeping procedure once
and for all. Thus, we assume that the local cost according to
(\ref{equ:localcostpu}) can be estimated by
\begin{align}
  \label{equ:LocalCosts}
  c(p,u) \leq \beta |u|^3(p+3)^{9},
\end{align}
where $\beta$ denote method dependent constant. Note that we neglect
here the additional hydrogen atoms resulting from the saturation
scheme.

\subsection{Adaptive multilevel BOSSANOVA}
\label{subsec:AdaptMBN}
To construct a quasi-optimal approximation as in
Section~\ref{subsucsec:MLBN}, a priori local cost and benefit
estimators are needed. Let us now discuss the case when there is no
suitable a priori estimate for the local benefit available. Then, a
possible methodology are so-called dimension-adaptive approaches,
which are adaptive {\em greedy-type} algorithms using {\em a
  posteriori} estimates~\cite{gerstner2003dimension}. Under the
assumption that the benefit-cost ratios obey some kind of appropriate
decay, these type of algorithms try to find quasi-optimal index sets
in an iterative procedure.  To this end, as always for an adaptive
heuristics, two main ingredients are needed -- an error indicator and
a refinement rule.  Here we propose a hybrid a priori/a posteriori
approach, since we determine the local benefit a posteriori and the
local cost a priori.

\subsubsection{Adaptive index sets in $\mathbb{N}\times \{0, \ldots, M\}$}
First, we introduce Algorithm~\ref{alg:Dim-Adapt-pk} which builds up a
set of indices $(p,k)$ such that the infinite sum
(\ref{equ:GammaSeries_k}) is approximated by a quasi optimal
approximation (\ref{equ:VLambda}) up to the desired accuracy at
minimal cost.
To this end, we refine in each step of our approach the current index
set $I$ by adding those indices $(p,k)\in\mathbb{N}\times \{0, \ldots,
M\}\setminus I$ with a benefit-cost ratio greater or equal than an a
priori chosen factor $[1-\alpha]$ times the highest benefit-cost ratio
$\max_{(p,k)\in A} \eta(p,k)$. Here, it is not necessary to consider
all of the possible candidates $(p,k)\in\mathbb{N}\times \{0, \ldots,
M\}\setminus I$ (which would be too many), but in each iteration step
we only take those indices into account which are in the direct
neighborhood of the actual set $I_\alpha$ and which result in
downward-closed sets.  Algorithm~\ref{alg:Dim-Adapt-pk} uses the local
cost estimate (\ref{equ:LocalCost-pk}) and computes the local benefit
a posteriori according to (\ref{equ:Bpk}) by
\begin{align*}
  |\mathcal{E}[\myDelta_{p,k}]|.
\end{align*}
Thus, for an index $(p,k)\in \mathbb{N}\times \{0, \ldots, M\}$, the
associated benefit-cost ratio is given by
\begin{align}
  \label{equ:eta-pk}
  \eta(p,k) :=  \frac{|\mathcal{E}[\myDelta_{p,k}]|}{C(p,k)}.
\end{align}
Altogether, to a given factor $\alpha\in[0, 1]$ and a maximal cost $C_{max}$,
Algorithm~\ref{alg:Dim-Adapt-pk} tries to generate a quasi-optimal index set $I_\alpha$
and an associated approximation
\begin{align}
  \label{equ:V-dim-adapt-pk}
  V_{{I}_{\alpha}}( \Nvec{X}) := \sum_{(p,k)\in{I}_{{\alpha}}} \myDelta_{p,k} (\Nvec{X}).
\end{align}
\begin{algorithm}
  \caption{Adaptive algorithm to construct a quasi-optimal index
    sets ${I}_\alpha \subseteq \mathbb{N}\times\{0,\ldots, M\}$ for a molecule with interaction graph $G$ to a given maximal allowed cost $C_{max}$, where $\alpha \in [0\, 1]$ is an input parameter.}\label{alg:Dim-Adapt-pk}
  \begin{algorithmic}
    \STATE {\bf Initialize:} ${I}_\alpha:= \emptyset$, $cost := 0$, ${A}:= \{(0,1)\}$
    \WHILE{$cost < C_{max}$}
    \STATE
    \begin{enumerate}
    \item Compute the values  $\eta(p,k):=|\mathcal{E}[\myDelta_{p,k}]|/C(p,k)$ for all $(p,k)\in A$ for which $\eta(p,k)$ has not been computed yet and set $cost := cost + C(p,k)$ respectively.
    \item Select new indices $\{ (p,k) \in A \,:\,  \eta(p,k) \geq (1-\alpha)  \max_{(q,l)\in A} \eta(q,l)\}$ and move them from $A$ to $I$.
    \item Generate new admissible active index set $A$ for $I_\alpha$, i.e.
    $A := \{ (p,k) \in \mathbb{N}\times\{0,\ldots, M\}\setminus {I}_\alpha \, : \, (p,k) \cup {I}_\alpha \text{ is downward-closed}\}$
    \end{enumerate}
    \ENDWHILE
  \end{algorithmic}
\end{algorithm}

\subsubsection{Adaptive index sets in $\mathbb{N}\times \mathcal{P}(\{1, \ldots, M\})$}
\label{subsubsec:AMBNpu}
An alternative algorithm, which provides the application of different
basis sets at different local parts of a molecule is as follows:
Instead to rely on set of indices $I \subseteq \mathbb{N}\times \{0,
\ldots, M\}$ as before, we now construct an index-set $\mathcal{I}
\subseteq \mathbb{N}\times \mathcal{P}(\{1, \ldots, M\})$, considering
the whole power set $\mathcal{P}(\{1, \ldots, M\})$ of $\{1, \ldots,
M\}$. This allows a much better adaption to the specific molecule in
consideration.  This straightforward generalization of
Algorithm~\ref{alg:Dim-Adapt-pk} is presented in
Algorithm~\ref{alg:Dim-Adapt-pu}.

In contrast to
Algorithm~\ref{alg:Dim-Adapt-pk}, Algorithm~\ref{alg:Dim-Adapt-pu} is
based on the expansion (\ref{equ:GammaSeries}) and uses benefit-cost
ratios associated with indices $(p,u) \in \mathbb{N}\times
\mathcal{P}(\{1, \ldots, M\})$ defined by
\begin{align}
  \label{equ:theta-pu}
  \theta(p,u):=\frac{b(p,u)}{c(p,u)},
\end{align}
where the local benefit is defined by
\begin{align*}
  b(p,u):=|\mathcal{E}[\myomega_{p,u}]|
\end{align*}
and the local cost is given according to (\ref{equ:localcostpu}).
Analogously to Algorithm~\ref{alg:Dim-Adapt-pk},
Algorithm~\ref{alg:Dim-Adapt-pu} tries to generate a
quasi-optimal index set $\mathcal{I}_\alpha$ where the corresponding
approximation is given by
\begin{align}
  \label{equ:V-dim-adapt-pu}
  V_{\mathcal{I}_{\alpha}}( \Nvec{X}) := \sum_{(p,u)\in\mathcal{I}_{{\varepsilon_i}}} \myomega_{p,u} ({X}_u).
\end{align}
\begin{algorithm}
  \caption{Adaptive algorithm to construct a quasi-optimal index
    sets $\mathcal{I}_\alpha \subseteq \mathbb{N}\times\mathcal{P}({\{1, \ldots, M\}})$ for a molecule with interaction graph $G$ to a given maximal allowed cost $C_{max}$, where $\alpha \in [0\, 1]$ is an input parameter.}\label{alg:Dim-Adapt-pu}
  \begin{algorithmic}
    \STATE {\bf Initialize:} $\mathcal{I}_\alpha:= \emptyset$, $cost := 0$, $\mathcal{A}:= \{(0,1)\}$
    \WHILE{$cost < C_{max}$}
    \STATE
    \begin{enumerate}
    \item Compute the values $\theta(p,u):=|\mathcal{E}[\myomega_{p,u}]|/c(p,u)$ for all
      $(p,u)\in \mathcal{A}$ for which $\theta(p,u)$ has not been
      computed yet and set $cost := cost + c(p,u)$ respectively.
    \item Select new indices $\{ (p,u) \in \mathcal{A} \,:\,  \theta(p,u) \geq (1-\alpha)  \max_{(q,v)\in \mathcal{A}} \theta(q,v)\}$ and move them from $\mathcal{A}$ to $\mathcal{I}$.
    \item Generate new admissible active index set $\mathcal{A}$ for $\mathcal{I}_\alpha$, i.e.
    $\mathcal{A} := \{ (p,u) \in \mathbb{N}\times\mathcal{P}({\{1, \ldots, M\}})\setminus \mathcal{I}_\alpha \, : \, (p,u) \cup \mathcal{I}_\alpha \text{ is downward-closed}\}$
    \end{enumerate}
    \ENDWHILE
  \end{algorithmic}
\end{algorithm}

Let us finally remark that in contrast to the relation
(\ref{equ:LocalCost-pk}) for the local costs $C(p,k)$ and $c(p,k)$,
the local benefit $|\mathcal{E}[\myDelta_{p,k}]|$ is not equal to the associated sum of
local-benefits $\sum_{u\subseteq \{1, \ldots, M\}, |u|=k}b(p,u)$.

\subsubsection{Parallel cost model}
\label{subsec:Parallel}
Note here that using a simple book-keeping procedure, all involved
energy evaluations $\widehat{V}_{p,u} ({X}_{u})$ only have to be
computed once and for all and can in particular be performed in
parallel. In such a parallel cost model, the total cost of employing
the multilevel BOSSANOVA method with a predefined index-set ${I}\subset \mathbb{N}\times \{0, \ldots, M\}$ can
be estimated from above by
\begin{align*}
  C^{PA}({I}) := \max_{(p,k)\in{I}}\max_{u\subseteq \{1, \ldots, M\},|u|=k}c(p,u).
\end{align*}
Analogously, the total costs in case of a predefined index-set $\mathcal{I}\subset \mathbb{N}\times \mathcal{P}(\{1, \ldots, M\})$ can be
estimated from above by
\begin{align*}
  C^{PA}(\mathcal{I}) := \max_{(p,u)\in\mathcal{I}}c(p,u).
\end{align*}

In case of the adaptive Algorithm~\ref{alg:Dim-Adapt-pu} all benefits
and respective benefit-cost ratios can be computed in parallel in step
1 of the while loop. Therefore, the total cost of
Algorithm~\ref{alg:Dim-Adapt-pu} in the parallel cost model can be
computed by setting $cost := cost + \max_{(p,u)}c(p,u)$ in step 1 of
the while loop. In an analogous way Algorithm~\ref{alg:Dim-Adapt-pk} can also be
modified according to the parallel cost model.

\section{Numerical experiments}
\label{sec:NumRes}
Now we present the results of our numerical experiments. This section
is divided into two parts. In the first part, we perform a numerical
study on the decay properties of the hierarchical surpluses of several
molecules. In the second part, we will present and discuss the numerical results
corresponding to the different approximation approaches which were presented in
Section~\ref{sec:AdaptiveMultilevelBOSSANOVA}.

In all involved electronic structure calculations we apply the
Massively Parallel Quantum Chemistry (MPQC) Program
\cite{janssen2004massively}. In addition, we use the software package
{\em MoleCuilder - a molecular builder} \cite{molecuilder} for the
fragmentation and saturation process involved in the various multilevel BOSSANOVA
approaches. Moreover, in the numerical experiments applying the DFT method
we use the common B3LYP exchange-correlation functional.

\subsection{Numerical study of benefit-cost ratios}
\label{subsec:NumericalStudy}

In a first step we analyze the benefit-cost ratios $\eta(p,k)$,
involved in Algorithm \ref{alg:Dim-Adapt-pk}, which are defined
according to (\ref{equ:eta-pk}). We first study two small
molecules. Our results for heptane ($C_7H_{16}$ - a small linear
molecule) and acrylamide ($C_3H_5NO$ - a small molecule with branches)
applying the HF method within the
decomposition~(\ref{equ:GammaSeries_k}) are given in Figure
\ref{fig:eta-HF-heptane-acrylamide}.
\begin{figure}[h]
  \centering
  \begin{tabular}{cc}
    \includegraphics[width=0.45\textwidth]{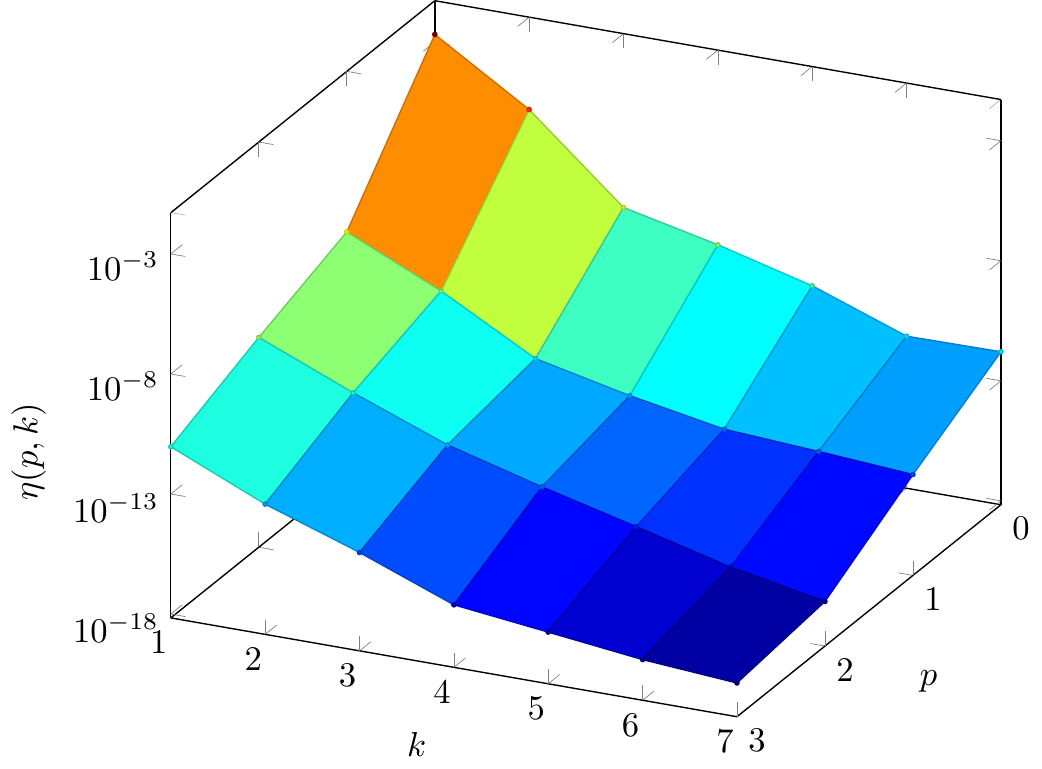}&
    \includegraphics[width=0.45\textwidth]{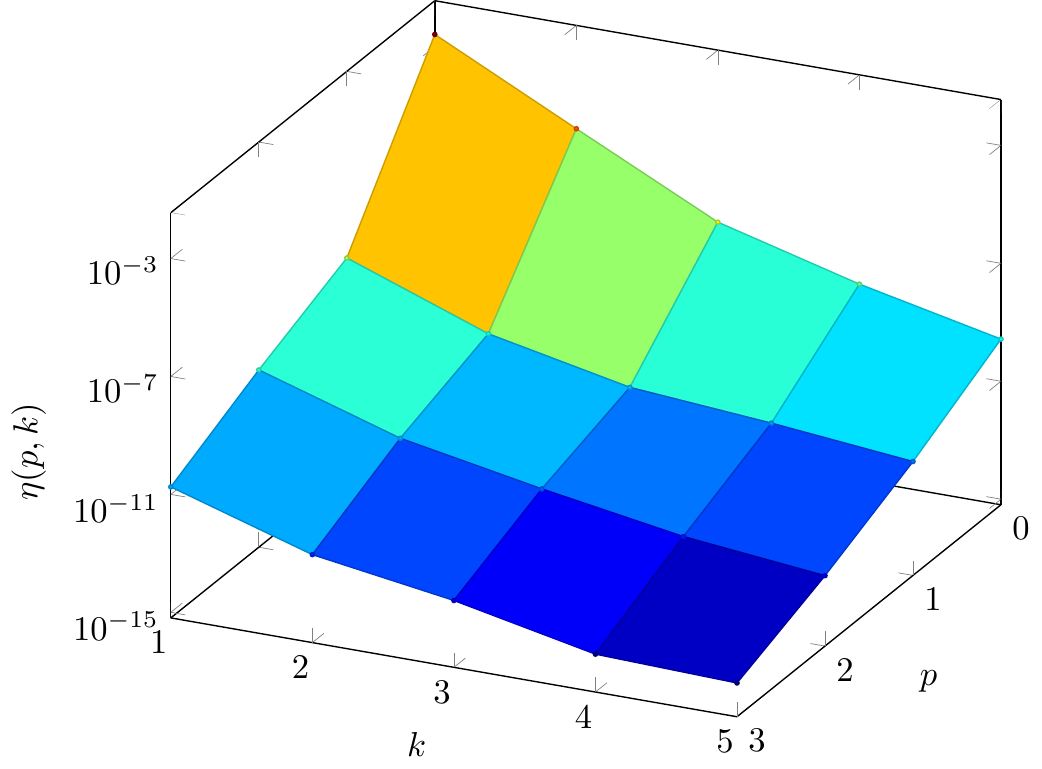}\\
  \end{tabular}
  \caption{Benefit-cost ratios $\eta(p,k)$ using the HF method corresponding to basis
    sets cc-pVTZ to cc-pV6Z, i.e. $p=0, \dots, 3$. {\em Left}:
    heptane ($C_7H_{16}$). {\em Right}: acrylamide ($C_3H_5NO$).}
\label{fig:eta-HF-heptane-acrylamide}
\end{figure}
We clearly see that the benefit-cost ratios $\eta(p,k)$ decay with $p+k$ and are in particular
approximately constant along the lines $l=p+k$ for a given fixed
$l$. This corresponds to the decay behavior in the regular sparse
grids case.

Moreover, for acrylamide, acetanilide ($C_8H_9NO$ - a small molecule
with a ring structure) and several alkanes ($C_5H_{12}$, $C_6H_{14}$,
$C_7H_{14}$, $C_{20}H_{42}$), we depict the benefit-ratio $\eta(p,k)$
in dependence of the sum $p+k$ in Figure
\ref{fig:eta-molecules-proteins}.
\begin{figure}[h]
  \centering
  \begin{tabular}{cc}
    \includegraphics[width=0.45\textwidth]{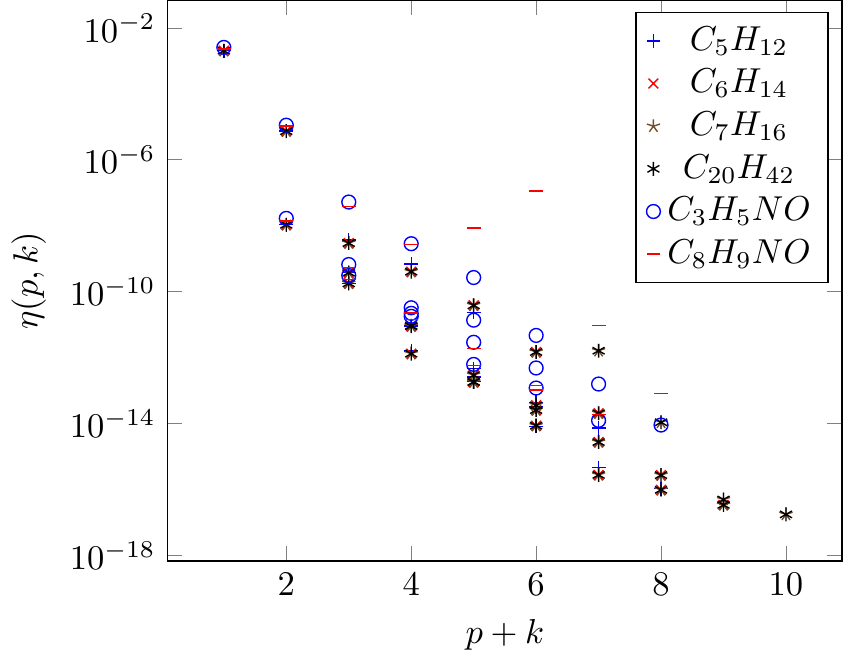}&
    \includegraphics[width=0.45\textwidth]{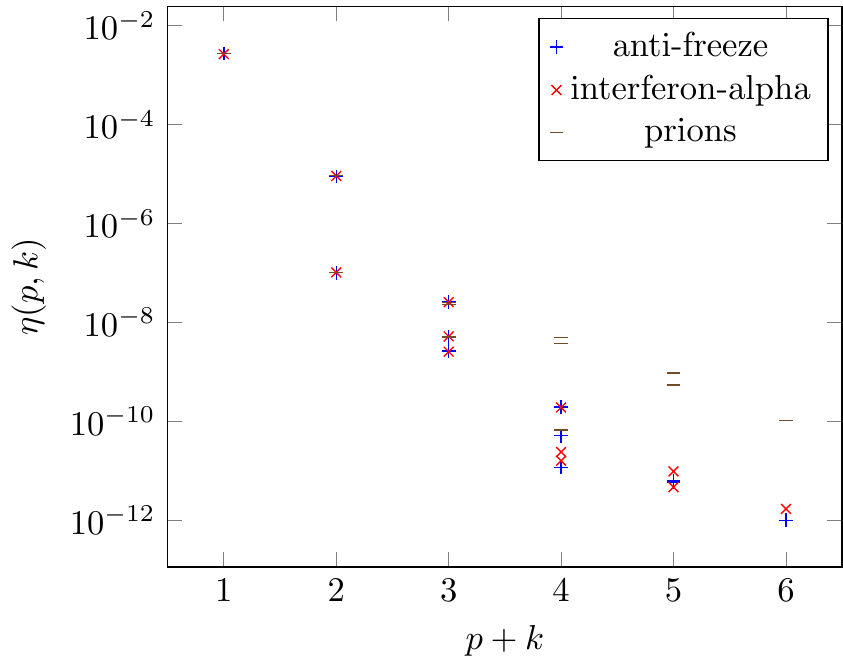}\\
  \end{tabular}
  \caption{Benefit-cost ratios $\eta(p,k)$  versus
    the sum $p+k$ for several alkane, acrylamide ($C_3H_5NO$),  acetanilide ($C_8H_9NO$) applying the HF method ({\em left}) and three proteins applying the DFT approach ({\em right}).}
\label{fig:eta-molecules-proteins}
\end{figure}
In addition, we show there the
benefit-cost ratio $\eta(p,k)$ for three large molecules, i.e.\ an
anti-freeze protein (992 atoms), an interferon-$\alpha$ protein (2698
atoms) and a prion (1688 atoms).  It can be seen from Figures
\ref{fig:eta-HF-heptane-acrylamide} and
\ref{fig:eta-molecules-proteins} that indeed our numerical results
show a decay of the benefit-cost ratio $\eta(p,k)$ with an increase of
the sum of the discretization parameters $p+k$, whereas acetanilide
exhibits some outliers probably due to its specific structure
including a ring. Also for the three large molecules, we observe
roughly a decay of $\eta(p,k)$ with $p+k$. Here, however, the
situation is much more less clear. The prion molecule seems to exhibit
a somewhat different behavior. Altogether this advocates the more
refined adaptive approaches of Algorithm \ref{alg:Dim-Adapt-pk} and
especially Algorithm \ref{alg:Dim-Adapt-pu}.

Therefore, in a second step, we study the benefit-cost ratios
$\theta(p,u)$ given in equation~(\ref{equ:theta-pu}) and used in
Algorithm~\ref{alg:Dim-Adapt-pu}.  Our numerical results given in
Figure~\ref{fig:theta-molecules-proteins} also exhibit a decay of the
benefit-cost ratio $\theta$ with an increase of the sum
$p+|u|$. However, compared to the range of the values of the
benefit-ratios $\eta(p,k)$ with $p+k=l$, we observe a much larger
range of deviation of the values of the benefit-cost ratios
$\theta(p,u)$ with $p+|u|=l$ in most cases, which holds in particular
for acetanilide and the proteins for e.g.\ $l\in\{3,4,5,6\}$.  This
suggests that for simple chain-like molecules the indicators
$\eta(p,k)$ and thus Algorithm~\ref{alg:Dim-Adapt-pk} are probably
sufficient. For more complex molecules however the indicators
$\theta(p,u)$ and thus Algorithm~\ref{alg:Dim-Adapt-pu} promise more
refined results and indeed might be superior.
\begin{figure}[h]
  \centering
  \begin{tabular}{cc}
    \includegraphics[width=0.45\textwidth]{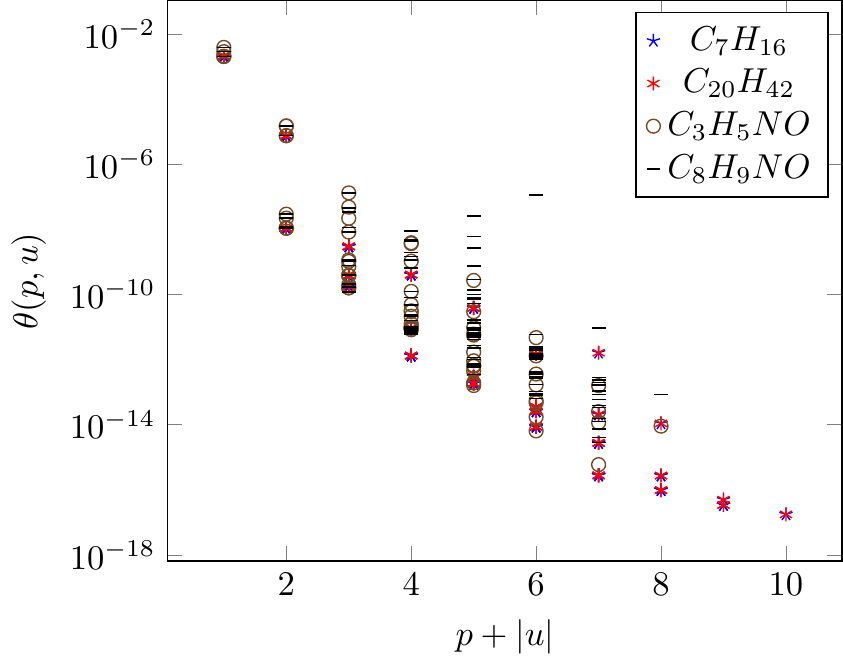}&
    \includegraphics[width=0.45\textwidth]{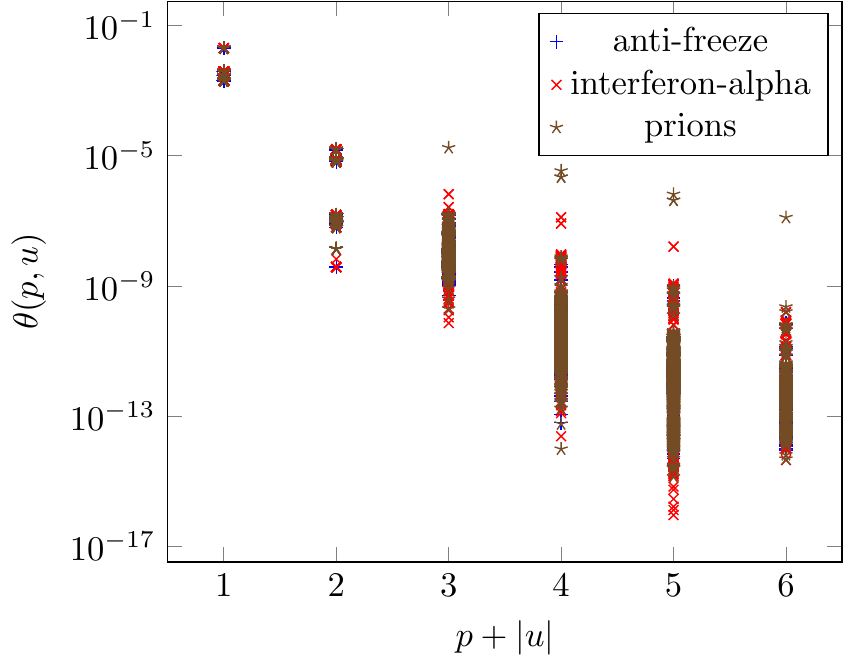}\\
  \end{tabular}
  \caption{Benefit-cost ratios $\theta(p,u)$ versus
    the sum $p+|u|$ for several alkane, acrylamide, acetanilide  and three proteins.}
\label{fig:theta-molecules-proteins}
\end{figure}

\subsection{Numerical results for the approximation of energy}
Now we consider the approximation of the energy of heptane by the
approaches introduced in
Section~\ref{sec:AdaptiveMultilevelBOSSANOVA}. To this end, we will
shortly recall our notation of the energy approximations associated to
the considered methods: $V_p^{HF}$ denotes the energy of the
conventional HF method using a cc-pV$(n)$Z basis set with $n=p+3$ and
$V_{p,k}^{BN}$ denotes the energy of the BOSSANOVA approach with basis
set level $p$ and bond order $k$ according to equation
(\ref{equ:BOSSANOVAENERGY}).  The energy of the multilevel BOSSANOVA
approach for a general admissible index set $I$ is defined
in~(\ref{equ:VLambda}) and is denoted by $V_{I}$. Here, inspired by
the observations in Section~\ref{subsec:NumericalStudy}, we first restrict
ourselves to use the index sets $I=I^{TD}_{L,1}$ as defined
in~(\ref{equ:ITDLalpha}), which correspond to the regular sparse grids
approach.
Furthermore we consider the energy approximations $V_{I_{\alpha}}$ and
$V_{\mathcal{I}_{\alpha}}$ according to the adaptive
Algorithm~\ref{alg:Dim-Adapt-pk} with equation
(\ref{equ:V-dim-adapt-pk}) and the adaptive Algorithm~\ref{alg:Dim-Adapt-pu} with
equation (\ref{equ:V-dim-adapt-pu}), respectively.
\begin{figure}[h!]
  \centering
  \begin{tabular}{c}
    \includegraphics[width=0.55\textwidth]{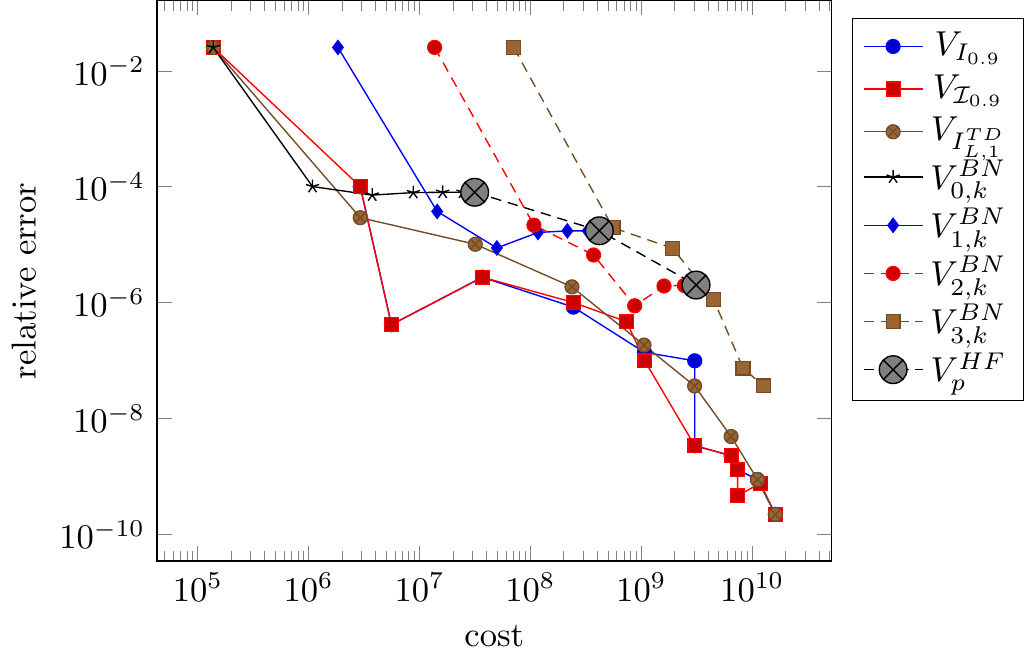}
  \end{tabular}
  \caption{Relative error versus costs of different approximation
    methods for heptane, where we choose $V_4^{HF}$ as reference
    energy, i.e.\ the HF energy computed with the help of the cc-pV6Z
    basis set.  Shown are for $p=0,1,2$ the energies $V_p^{HF}$ of the
    HF method, for $k=1,\ldots, 6$ the energies $V_{p,k}^{BN}$ of the
    BOSSANOVA approach and for $L=1,\ldots,9$  the energies
    $V_{I^{TD}_{L,1}}$ of the the multilevel BOSSANOVA method.
    For the adaptive multilevel
    BOSSANOVA approach the energy of each step of
    Algorithm~\ref{alg:Dim-Adapt-pk} and
    Algorithm~\ref{alg:Dim-Adapt-pu} for $V_{{I}_{0.9}}$ and
    $V_{\mathcal{I}_{0.9}}$ are shown, respectively.}
\label{fig:approx-HF-heptane}
\end{figure}

For the heptane example we choose the HF approach as the local
electronic structure method.  The results are given in
Figure~\ref{fig:approx-HF-heptane}. We observe that the three
multilevel BOSSANOVA approaches, i.e.\ $V_{I^{TD}_{L,1}}$,
$V_{{I}_{0.9}}$ and $V_{\mathcal{I}_{0.9}}$, exhibit a substantially
better convergence behavior than the HF and the standard BOSSANOVA
method, i.e.\ than $V_p^{HF}$ and $V_{p,k}^{BN}$, respectively.
Furthermore, we see that the adaptive variants, i.e.\ $V_{{I}_{0.9}}$
and $V_{\mathcal{I}_{0.9}}$, give almost the same results (for values
with a relative error larger than 1.e-5 they are even exactly the
same) and that both indeed show even a slightly better convergence
behavior than the variant with a priori chosen index sets, i.e.\
$V_{I^{TD}_{L,1}}$.
\begin{table}[h]
  \caption{Comparison of the costs of the different methods
    to obtain a relative error less than a value of 1.e-5 for heptane. In addition the speed up factor for each method compared to the cost of the conventional HF method is given.}
  \label{tab:CompareTable1}
  \centering
  \begin{tabular}{c|ccccc}
     & rel. err. & cost & speed-up  & parallel cost & parallel speed-up\\\hline
    $V_2^{HF}$          & {2.02e-6} &   {6.7e+08}& {1.0}  & 6.7e+8 & {1.0} \\
    $V^{BN}_{2,3}$       & {6.67e-6} &   {3.7e+08}& {1.8}  & 5.3e+7 & {12.7}\\
    $V_{I^{TD}_{4,1}}$    & {1.88e-6} &   {2.4e+08}& {2.8}& 1.6e+7 & {42.9}\\
    $V_{{I}_{0.9}}$       & {2.75e-6} &   {3.7e+07}& {18.1}& 2.9e+6 & {230.2}\\
    $V_{\mathcal{I}_{0.9}}$ & {2.75e-6} &   {3.7e+07}& {18.1}& 2.9e+6 & {230.2}\\ \hline
  \end{tabular}
\end{table}
In addition, we compare in Table~\ref{tab:CompareTable1} the costs of
all methods to obtain a specific accuracy. We obtain a
speed-up factor of about $18$ for the cost of the two adaptive
multilevel BOSSANOVA energies, i.e.\ $V_{{I}_{0.9}}$ and
$V_{\mathcal{I}_{0.9}}$, compared to the energy value computed by the
conventional HF method, i.e.\ $V_2^{HF}$. For the multilevel BOSSANOVA
energy $V_{I^{TD}_{4,1}}$ we still get a speed up factor of about
$2.8$ compared to the cost of $V_2^{HF}$. In the parallel cost model
we obtain even a speed-up factor of about $230$ for $V_{{I}_{0.9}}$
and of about $43$ for $V_{I^{TD}_{4,1}}$. We thus see that, already
for a small molecule like heptane, the gain of our new method is
substantial. For larger molecules it should be even more profound.

Therefore, we apply our new adaptive approaches to the more complex
molecules acetanilide ($C_8H_9NO$) and acrylamide
($C_3H_5NO$). Moreover, we varied the parameter $\alpha$ in a wider
range of values. It turned out that values greater than $\alpha=0.75$
perform quite well. Thus, we considered $\alpha=0.9$ and $\alpha=0.99$
in the following. Our numerical results, given in
Figure~\ref{fig:approx-HF-complex}, show that the
convergence behavior of both algorithms is similar.  However, the
results for acetanilide suggest that for more complex molecules the
more general dimension-adaptive approximation approach according to
(\ref{equ:V-dim-adapt-pu}) and
Algorithm~\ref{alg:Dim-Adapt-pu} is potentially superior compared to
the more restrictive dimension-adaptive technique corresponding to
Algorithm~\ref{alg:Dim-Adapt-pk} and
(\ref{equ:V-dim-adapt-pk}).  Note that each approximation in form of
equation (\ref{equ:V-dim-adapt-pk}) can be represented also in the
form corresponding to (\ref{equ:V-dim-adapt-pu}), but this
does not hold vice versa.
\begin{figure}[h!]
  \centering
  \begin{tabular}{cc}
    \includegraphics[width=0.45\textwidth]{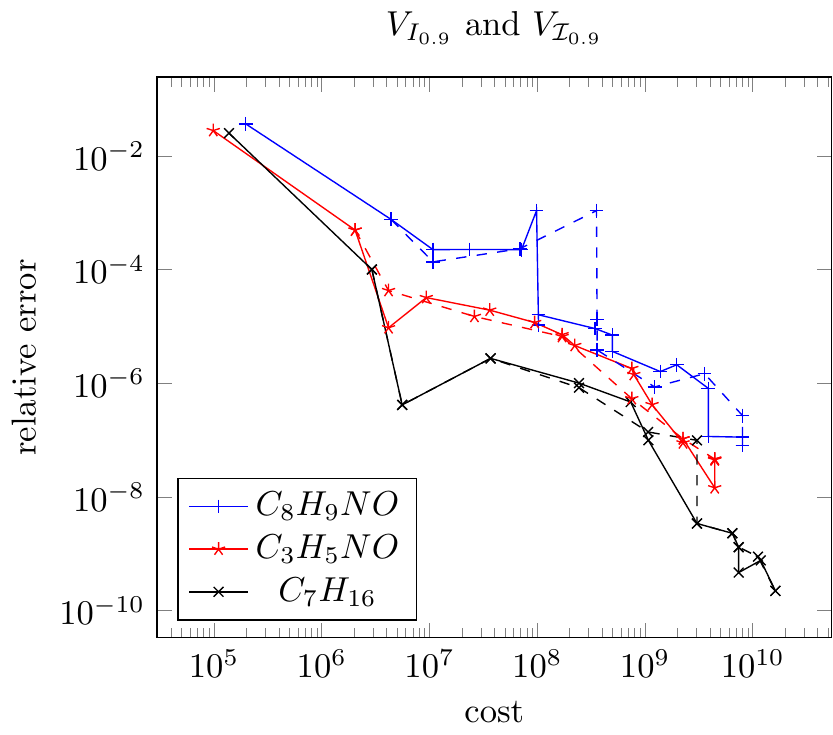}&
    \includegraphics[width=0.45\textwidth]{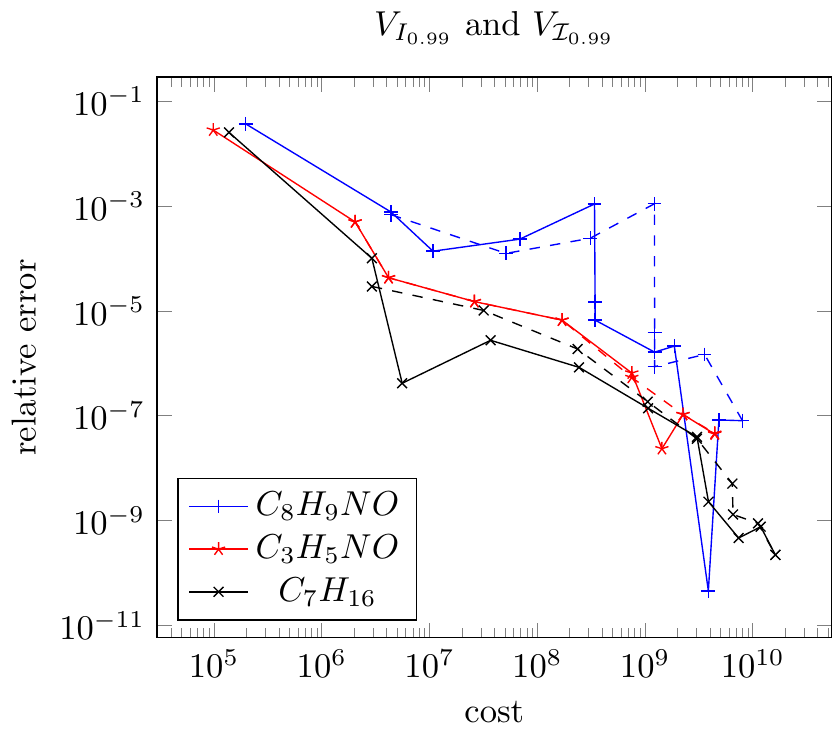}
  \end{tabular}
  \caption{Relative error versus costs of Algorithm~\ref{alg:Dim-Adapt-pk} and Algorithm~\ref{alg:Dim-Adapt-pu}. The solid lines correspond to the errors of the energies $V_{\mathcal{I}_{\alpha}}$ of Algorithm~\ref{alg:Dim-Adapt-pu}, while the dashed lines  correspond to the errors of the energies $V_{{I}_{\alpha}}$ of Algorithm~\ref{alg:Dim-Adapt-pk}.}
\label{fig:approx-HF-complex}
\end{figure}

The larger the molecules are, the more profound the gain for our
BOSSANOVA approach is compared to the conventional methods applied to
the overall molecule. To show this we finally apply our new adaptive
multilevel method to molecules which consist of so many atoms such
that the conventional methods can no longer be applied in
practice. To this end, as in Section~\ref{subsec:NumericalStudy}, we
will consider the anti-freeze protein, the interferon-$\alpha$ protein
and the prion, where we now use the DFT method within our multilevel
BOSSANOVA approach.
\begin{figure}[h!]
  \centering
  \begin{tabular}{c}
    \includegraphics[width=0.45\textwidth]{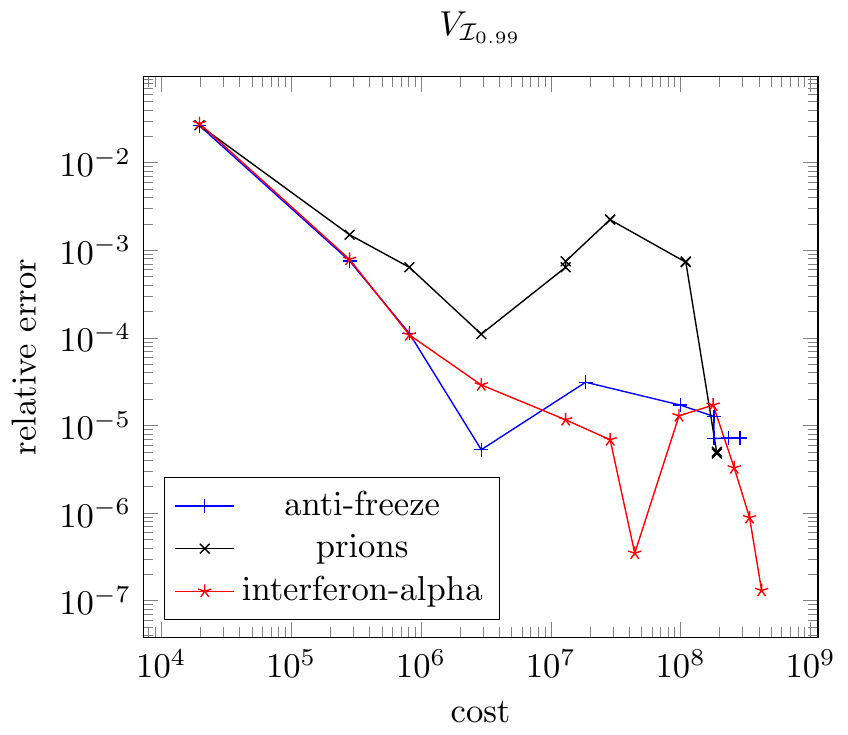}
  \end{tabular}
  \caption{Relative error versus parallel costs of Algorithm~\ref{alg:Dim-Adapt-pu}, where we choose $V_{3,3}^{BN}$ as reference energy, i.e.\ the energy of the BOSSANOVA approach for basis set cc-pV6Z and bond order three.
  }
\label{fig:approx-DFT-proteins}
\end{figure}
The numerical results are given in
Figure~\ref{fig:approx-DFT-proteins}. There we choose as the reference
energy to compute the relative error the BOSSANOVA energy
$V_{3,3}^{BN}$, apply the parallel cost model and choose~$\alpha=0.99$.

Altogether, the results of our numerical experiments suggest that
indeed the adaptive multilevel BOSSANOVA approaches can be efficiently
and successfully applied to larger molecules. Especially
Algorithm~\ref{alg:Dim-Adapt-pu} promises, due to its local refinement
property, to enable systematic quasi-optimal approximations in a
cost-efficient manner.

\section{Concluding Remarks}\label{sec:Con}
In this article we presented the adaptive multilevel BOSSANOVA
decomposition approach for the approximate ground state solution to the electronic
Schr\"odinger equation for a given molecular system.  Here, we followed
the idea of sparse grids and the adaptive combination technique to
obtain systematically quasi-optimal approximations, i.e.\ a specific truncation of
the hierarchical series such that the total benefit is maximized for a
fixed amount of costs.

We described and discussed an a priori truncation scheme and in
particular two a posteriori dimension-adaptive algorithms for a
seamless coupling of local computations whith a high level basis set
where needed and low level basis sets where locally sufficient.
We gave numerical results for small chain molecules, where we obtained
substantial speed up factors compared to the cost of the conventional
HF method. We furthermore presented numerical results for three large
proteins.  Here, the more general variant of the dimension-adaptive
approach, which allows for local adaptivity, seems to be superior
compared to the more restrictive variant, which does not allow for
local adaptivity. Let us point out that our approach is trivial
to parallelize since the evaluation of each fragment by an appropriate
solver can be done independently, see e.g.\ \cite{Heber2014}.

In this article, for reasons of simplicity, we did not investigate the
treatment of aromatic systems with delocalized electrons with ring
structures in more detail. For such problems, there is surely room for
further improvement of our algorithms. A simple modification of the
conventional BOSSANOVA approach was already successfully applied to such systems in
\cite{griebel2014bond,Heber2014}.
The impact of the neglected long-range Coulomb energy on the
accuracy of the method and techniques to recover this contribution were given
elsewhere, see \cite{Heber2014}.

Note furthermore that the BOSSANOVA
approach is not free of empirical parameters due to the necessity to
saturate dangling bonds with hydrogen in the fragmentation
process. But the typical bond lengths and angles of hydrogenated
systems are well assessed by measurements.

\section{Acknowledgments}
The authors gratefully acknowledge the Hausdorff Center for
Mathematics (HCM) and the Cooperative Research Centre (SFB 1060) at
University of Bonn for the financial support. The authors also thank
Frederik Heber (Numerical Analysis and Applied Mathematics, Saarland
University) for useful discussions.

\section*{Appendix}
\begin{appendix}
  \section{Modified energy terms $\widehat{V}^{G}_{p,u}$}\label{sec:AppendixGraph}
  Let $G$ be an connected interaction graph of a molecule. Then,
  according to the BOSSANOVA approach, the modified energy associated
  with a fragment $u$ is given by
  \begin{align*}
    \widehat{V}^{G}_{p,\{i_1, \ldots, i_{k}\}}({X}_{\{i_1, \ldots, i_{k}\}}) := \sum_{u \in \mathcal{C}(G)}
    V_p\left({X}_{i_1}, \ldots, {X}_{i_k}, R^G_1(u), \ldots, R^G_{h^G(u)}(u)\right),
  \end{align*}
  where
  \begin{multline*}
    \mathcal{C}((P,E)) := \bigl\{u \subseteq P \,:\, \text{ $(P_u,
      E_u)\subseteq (P,E)$ is connected and}\\\text{for all $v \supsetneq u$
      subgraph $(P_v, E_v)$ is not connected}\bigr\}.
  \end{multline*}
  Thus, it holds the relation
  \begin{align*}
    \widehat{V}_{p,A\cup B}^G(X_{A\cup B}) =  \widehat{V}_{p,A}^G(X_{A}) +  \widehat{V}_{p,A}^G(X_{A}),
  \end{align*}
  for all $p\in\mathbb{N}$ and for all pairs of subsets $A, B
  \subseteq\{1, \ldots, M\}$ with disconnected induced subgraphs $G_A,
  G_B \subseteq G$. Moreover, we can derive that the corresponding
  hierarchical surplus $\myw_{p,A\cup B}$ vanishes:
  \begin{lemma}\label{lem:Graph}
    Let $G=({P},{E})$ be an interaction graph. Let $A, B
    \subseteq {P}$, $A \cap B = \emptyset$ and let the
    subgraphs $G_A$ and $G_B$ induced by $A$ and $B$, respectively, be disconnected.
    Then
    \begin{equation*}
      W^G_{p,A\cup B}({X}_{A\cup B}) = 0.
    \end{equation*}
  \end{lemma}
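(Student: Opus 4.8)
The plan is to expand the hierarchical surplus through its inclusion--exclusion definition (\ref{equ:Wpu}) and then exploit the additivity of the modified energy $\widehat{V}^G_{p,\cdot}$ over disconnected pieces, which was established immediately above the lemma. Writing $S := A \cup B$ and using $A \cap B = \emptyset$, I start from
\begin{align*}
  \myw^G_{p,S}({X}_S) = \sum_{v\subseteq S} (-1)^{|S|-|v|}\,\widehat{V}^G_{p,v}({X}_v).
\end{align*}

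First I would reindex the sum: every $v \subseteq S$ splits uniquely as a disjoint union $v = a \cup b$ with $a := v\cap A \subseteq A$ and $b := v\cap B \subseteq B$, so that $|v| = |a| + |b|$ and $|S| = |A| + |B|$. Because no edge of $G$ joins $A$ to $B$, the same is true for any $a\subseteq A$ and any $b\subseteq B$; hence the connected components of $G_{a\cup b}$ are precisely those of $G_a$ together with those of $G_b$, and the additivity relation stated just above the lemma applies to every such pair, giving $\widehat{V}^G_{p,a\cup b}({X}_{a\cup b}) = \widehat{V}^G_{p,a}({X}_a) + \widehat{V}^G_{p,b}({X}_b)$.

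Substituting this into the reindexed sum and separating the two contributions, I would factor each resulting double sum into a product of a sum over $a\subseteq A$ and a sum over $b\subseteq B$. In the term carrying $\widehat{V}^G_{p,a}$ the $b$-dependence collapses to the alternating binomial sum $\sum_{b\subseteq B}(-1)^{|B|-|b|} = \sum_{j=0}^{|B|}\binom{|B|}{j}(-1)^{|B|-j} = (1-1)^{|B|}$, which vanishes since $B\neq\emptyset$; symmetrically, the term carrying $\widehat{V}^G_{p,b}$ vanishes because $\sum_{a\subseteq A}(-1)^{|A|-|a|} = (1-1)^{|A|} = 0$. Both contributions being zero yields $\myw^G_{p,S} = 0$, as claimed.

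I do not anticipate a serious obstacle; the argument is a standard M\"obius / inclusion--exclusion cancellation. The only points requiring care are the justification that the additivity relation extends from the pair $(A,B)$ to all sub-pairs $(a,b)$ --- which is immediate from the absence of $A$--$B$ edges --- and the tacit hypothesis that $A$ and $B$ are nonempty, without which the alternating binomial factors would not vanish; I would make this nonemptiness explicit in the statement.
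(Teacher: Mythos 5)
Your proof is correct, and it takes a genuinely different route from the paper's. The paper argues by induction on the number of vertices: it uses the recursive definition (\ref{equ:Wpu-recursive}) and the telescoping identity $\widehat{V}^{G}_{p,a}({X}_a) = \sum_{b \subseteq a} \myw_{p,b}({X}_b)$, splits the sum over proper subsets of $A\cup B$ into pure-$A$ terms, pure-$B$ terms and mixed terms, cancels the pure terms against $\widehat{V}^{G}_{p,A\cup B} = \widehat{V}^{G}_{p,A}+\widehat{V}^{G}_{p,B}$, and invokes the induction hypothesis to annihilate the mixed surpluses $W^{G}_{p,a\cup b}$. You instead work directly from the closed inclusion--exclusion formula (\ref{equ:Wpu}), reindex over pairs $(a,b)\in\mathcal{P}(A)\times\mathcal{P}(B)$, push the additivity down to every sub-pair (legitimately, since the absence of $A$--$B$ edges is inherited by $a\subseteq A$, $b\subseteq B$; note that for pairs with $a=\emptyset$ or $b=\emptyset$ this additivity is just the convention $\widehat{V}^{G}_{p,\emptyset}=0$, consistent with $\myw_{p,\emptyset}=0$), and then factor out the vanishing alternating sums $\sum_{b\subseteq B}(-1)^{|B|-|b|}=(1-1)^{|B|}=0$. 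Your one-shot M\"obius cancellation is shorter, needs no induction, and incidentally sidesteps a looseness in the paper's argument: there the induction runs over $|P|$, yet the mixed terms $W^{G}_{p,a\cup b}$ live on the \emph{same} graph $G$ with $|P|=n+1$, so strictly the induction should be over $|A\cup B|$ (or one should pass to induced subgraphs). What the paper's inductive proof buys in return is that it mirrors the recursive bookkeeping actually used in the BOSSANOVA computation and establishes the useful intermediate identity above along the way. Your caveat about nonemptiness is also apt: for $B=\emptyset$ the claim $W^{G}_{p,A}=0$ is false in general, and the paper's proof tacitly needs $A,B\neq\emptyset$ as well (its splitting of the proper subsets presupposes that $A$ and $B$ are proper subsets of $A\cup B$); making this hypothesis explicit, as you suggest, would improve the statement.
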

  \begin{proof}
    We use induction: The base case can be easily seen for graphs
    $G=({P},{E})$ with sets $|{P}| \leq 2$.  Let
    us assume that the statement holds for graphs
    $G=({P}^\prime,{E}^\prime)$ with
    $|{P}^\prime|\leq n$. Now let $G=({P},{E})$
    with $|{P}|=n+1$.
    Note that from the recursive definition of $\widehat{V}^{G}_{p,a}$ it immediately follows that
    \begin{equation*}
      \widehat{V}^{G}_{p,a}({X}_a) = \sum_{b \subseteq a} \myw_{p,b}({X}_b)
    \end{equation*}
    holds for all $a \subseteq {P}$.  With
    $\widehat{V}^{G}_{p,A\cup B}(X_{A\cup B}) =  \widehat{V}^{G}_{p,A}(X_{A}) +  \widehat{V}^{G}_{p,B}(X_{B})$
    and $W^{G}_{p,\emptyset}=0$, we then obtain
    \begin{align*}
      W^{G}_{p,A\cup B}({X}_{A\cup B}) & = \widehat{V}^{G}_{A\cup B}({X}_{A\cup B})
      - \sum_{a \subseteq A, a\neq\emptyset}W^{G}_{p,a}({X}_{a})
      - \sum_{b \subseteq B, b\neq\emptyset}W^{G}_{p,b}({X}_{b})\\
      & \quad - \sum_{\substack{a \subseteq A, b \subseteq B\\ a\neq\emptyset, b\neq
          \emptyset, |a\cup b|<|A\cup B|}} W^{G}_{p,a\cup b}({X}_{a\cup b})
      - W^{G}_{p,\emptyset}\\
      &= \widehat{V}^{G}_{p,A}({X}_{A}) + \widehat{V}^{G}_{p,B}({X}_{B})\\
      & \quad - \sum_{a \subseteq A}W^{G}_{p,a}({X}_{a})
      - \sum_{b \subseteq B}W^{G}_{p,b}({X}_{b})
      - \sum_{\substack{a \subseteq A, b \subseteq B\\ a\neq\emptyset, b\neq
          \emptyset, |a\cup b|<|A\cup B|}} W^{G}_{p, a\cup b}({X}_{a\cup b}).
    \end{align*}
    Now, we apply the induction
    hypothesis to each $W^{G}_{p,A\cup B}$: $|a\cup b| < |A\cup B|
    \leq |{P}| = n+1$ and finally obtain
    \begin{align*}
      W^{G}_{p,A\cup B}({X}_{A\cup B}) & = \widehat{V}^{G}_{p,A}({X}_{A}) - \sum_{a \subseteq A}W^{G}_{p,a}({X}_{a}) + \widehat{V}^{G}_{p,B}({X}_{B}) - \sum_{b \subseteq B}W^{G}_{p,b}({X}_{b})\\
      & \quad - \sum_{\substack{a \subseteq A, b \subseteq B \\ a\neq\emptyset, b\neq \emptyset, |a\cup b|<|A\cup B|}} W^{G}_{p,a\cup b}({X}_{a\cup b})\\
      &= - \sum_{\substack{a \subseteq A, b \subseteq B \\ a\neq\emptyset, b\neq
          \emptyset, |a\cup b|<|A\cup B|}} W^{G}_{p, a\cup b}({X}_{a\cup b}) = 0.
    \end{align*}
  \end{proof}
\section{Example (small linear molecule)}\label{sec:LinMolExample}
In the following, let us shortly give an example corresponding to a
linear molecule of length three. Let $G=(\{1,2,3\},\{\{1,2\},
\{2,3\}\}$. Then, from (\ref{equ:w_pu_contribution}) and
(\ref{equ:Wpu}) it results for level $p=0$:
\begin{align*}
  \myomega_{0,\{1\}}(X_{\{1\}})&:=\widehat{V}_0(X_{\{1\}}),\\
  \myomega_{0,\{2\}}(X_{\{2\}})&:=\widehat{V}_0(X_{\{2\}}),\\
  \myomega_{0,\{3\}}(X_{\{3\}})&:=\widehat{V}_0(X_{\{3\}}),\\
\myomega_{0,\{1,2\}}(X_{\{1,2\}})&:=\widehat{V}_0(X_{\{1,2\}})-(\myomega_{0,\{1\}}(X_{\{1\}})+\myomega_{0,\{2\}}(X_{\{2\}}))\\
\myomega_{0,\{2,3\}}(X_{\{2,3\}})&:=\widehat{V}_0(X_{\{2,3\}})-(\myomega_{0,\{2\}}(X_{\{2\}})+\myomega_{0,\{3\}}(X_{\{3\}})),\\
\myomega_{0,\{1,2,3\}}(X_{\{1,2,3\}})&:=\widehat{V}_0(X_{\{1,2,3\}})-(\myomega_{0,\{2,3\}}(X_{\{2,3\}})+\myomega_{0,\{1,2\}}(X_{\{1,2\}})+\myomega_{0,\{1\}}+\myomega_{0,\{2\}}+\myomega_{0,\{3\}}).
\end{align*}
For level $p=1$ we obtain:
\begin{align*}
  \myomega_{1,\{1\}}(X_{\{1\}}):=\widehat{V}_1(X_{\{1\}})&-\myomega_{0,\{1\}}(X_{\{1\}}),\\
  \myomega_{1,\{2\}}(X_{\{2\}}):=\widehat{V}_1(X_{\{2\}})&-\myomega_{0,\{2\}}(X_{\{2\}}),\\
  \myomega_{1,\{3\}}(X_{\{3\}}):=\widehat{V}_1(X_{\{3\}})&-\myomega_{0,\{3\}}(X_{\{3\}}),\\
  \myomega_{1,\{1,2\}}(X_{\{1,2\}}):=\widehat{V}_1(X_{\{1,2\}})&-(\myomega_{1,\{1\}}(X_{\{1\}})+\myomega_{1,\{2\}}(X_{\{2\}}))-(\myomega_{1,\{1\}}(X_{\{1\}})+\myomega_{1,\{2\}}(X_{\{2\}}))\\
  \myomega_{1,\{2,3\}}(X_{\{2,3\}}):=\widehat{V}_1(X_{\{2,3\}})&-(\myomega_{1,\{2\}}(X_{\{2\}})+\myomega_{1,\{3\}}(X_{\{3\}}))-(\myomega_{0,\{2\}}(X_{\{2\}})+\myomega_{0,\{3\}}(X_{\{3\}})),\\
  \myomega_{0,\{1,2,3\}}(X_{\{1,2,3\}}):=\widehat{V}_1(X_{\{1,2,3\}})&-(\myomega_{1,\{2,3\}}(X_{\{2,3\}})+\myomega_{1,\{1,2\}}(X_{\{1,2\}})+\myomega_{1,\{1\}}+\myomega_{1,\{2\}})\\
  &-(\myomega_{0,\{2,3\}}(X_{\{2,3\}})+\myomega_{0,\{1,2\}}(X_{\{1,2\}})+\myomega_{0,\{1\}}+\myomega_{0,\{2\}}).
\end{align*}
Note here that the induced subgraph $G_{\{1,3\}}$ is not
connected. Hence, the contributions $\myomega_{p,\{1,3\}}$ are zero for all
$p\in\mathbb{N}$.

\end{appendix}

\bibliographystyle{plain}
\bibliography{Literature}

\begin{thebibliography}{10}

\bibitem{Abell1985}
G.~Abell.
\newblock Empirical chemical pseudopotential theory of molecular and metallic
  bonding.
\newblock {\em Physical Review B}, 31(10):6184--6196, 1985.

\bibitem{ambrosio2011semiclassical}
L.~Ambrosio, A.~Figalli, G.~Friesecke, J.~Giannoulis, and T.~Paul.
\newblock Semiclassical limit of quantum dynamics with rough potentials and
  well-posedness of transport equations with measure initial data.
\newblock {\em Communications on Pure and Applied Mathematics},
  64(9):1199--1242, 2011.

\bibitem{Amovilli2002}
A.~Amovilli, I.~Cacelli, S.~Campanile, and G.~Prampolini.
\newblock Calculation of the intermolecular energy of large molecules by a
  fragmentation scheme: Application to the {4-n-pentyl-4-cyanobiphenyl (5CB)}
  dimer.
\newblock {\em Journal of Chemical Physics}, 117:3003--3012, 2002.

\bibitem{andre2014nobel}
J.-M. Andr{\'e}.
\newblock The {N}obel {P}rize in chemistry 2013.
\newblock {\em Chemistry International}, 36(2):2--7, 2014.

\bibitem{bates2011development}
D.~Bates, J.~Smith, T.~Janowski, and G.~Tschumper.
\newblock Development of a 3-body: many-body integrated fragmentation method
  for weakly bound clusters and application to water clusters {($H_2O$)} n =
  3-10, 16, 17.
\newblock {\em The Journal of Chemical Physics}, 135(4):044123, 2011.

\bibitem{beran2009approximating}
G.~Beran.
\newblock Approximating quantum many-body intermolecular interactions in
  molecular clusters using classical polarizable force fields.
\newblock {\em The Journal of Chemical Physics}, 130(16):164115, 2009.

\bibitem{bowler2012mathcal}
D.~Bowler and T.~Miyazaki.
\newblock {O(N)} methods in electronic structure calculations.
\newblock {\em Reports on Progress in Physics}, 75(3):036503, 2012.

\bibitem{Brenner2002}
D.~Brenner.
\newblock A second-generation reactive bond order ({REBO}) potential energy
  expression for hydrocarbons.
\newblock {\em Journal of Physics: Condensed Matter}, 14:783--802, 2002.

\bibitem{le2005computational}
C.~Le Bris.
\newblock Computational chemistry from the perspective of numerical analysis.
\newblock {\em Acta Numerica}, 14:363--444, 2005.

\bibitem{bungartz2004sparse}
H-J. Bungartz and M.~Griebel.
\newblock Sparse grids.
\newblock {\em Acta Numerica}, 13:147--269, 2004.

\bibitem{Bungartz.Griebel.Roschke.ea:1996}
H.-J. Bungartz, M.~Griebel, D.~R\"oschke, and C.~Zenger.
\newblock A proof of convergence for the combination technique for the
  {L}aplace equation using tools of symbolic computation.
\newblock {\em Mathematics and Computers in Simulation}, 42:595--605, 1996.

\bibitem{bygrave2012embedded}
P.~Bygrave, N.~Allan, and F.~Manby.
\newblock The embedded many-body expansion for energetics of molecular
  crystals.
\newblock {\em The Journal of Chemical Physics}, 137(16):164102, 2012.

\bibitem{cances2008molecular}
E.~Canc{\`e}s, C.~Le Bris, and P-L. Lions.
\newblock Molecular simulation and related topics: some open mathematical
  problems.
\newblock {\em Nonlinearity}, 21(9):T165--T176, 2008.

\bibitem{cances2003computational}
E.~Canc{\`e}s, M.~Defranceschi, W.~Kutzelnigg, C.~Le Bris, and Y.~Maday.
\newblock Computational {Q}uantum {C}hemistry: A primer.
\newblock {\em Handbook of Numerical Analysis}, 10:3--270, 2003.

\bibitem{carter1997vibrational}
S.~Carter, S.~Culik, and J.~Bowman.
\newblock Vibrational self-consistent field method for many-mode systems: A new
  approach and application to the vibrations of {CO} adsorbed on {Cu} (100).
\newblock {\em The Journal of Chemical Physics}, 107(24):10458--10469, 1997.

\bibitem{Collins2006}
M.~Collins and V.~Deev.
\newblock Accuracy and efficiency of electronic energies from systematic
  molecular fragmentation.
\newblock {\em Journal of Chemical Physics}, 125:104104, 2006.

\bibitem{dahlke2007electrostatically}
E.~Dahlke and D.~Truhlar.
\newblock Electrostatically embedded many-body correlation energy, with
  applications to the calculation of accurate second-order {M{\o}ller-Plesset}
  perturbation theory energies for large water clusters.
\newblock {\em Journal of Chemical Theory and Computation}, 3(4):1342--1348,
  2007.

\bibitem{DawBaskes1984}
M.~Daw and M.~Baskes.
\newblock Embedded-atom method: Derivation and application to impurities,
  surfaces and other defects in metals.
\newblock {\em Physical Review B}, 29(12):6443--6453, 1984.

\bibitem{Deev2005}
V.~Deev and M.~A. Collins.
\newblock Approximate ab initio energies by systematic molecular fragmentation.
\newblock {\em Journal of Chemical Physics}, 122(15):154102, 2005.

\bibitem{deyonker2006correlation}
N.~DeYonker, T.~Cundari, and A.~Wilson.
\newblock The correlation consistent composite approach {(ccCA)}: An
  alternative to the {Gaussian-n} methods.
\newblock {\em The Journal of Chemical Physics}, 124(11):114104, 2006.

\bibitem{dunning1989gaussian}
T.~Dunning.
\newblock Gaussian basis sets for use in correlated molecular calculations.
  {I}. {T}he atoms boron through neon and hydrogen.
\newblock {\em The Journal of Chemical Physics}, 90(2):1007--1023, 1989.

\bibitem{elliott2010partition}
P.~Elliott, K.~Burke, M.~Cohen, and A.~Wasserman.
\newblock Partition density-functional theory.
\newblock {\em Physical Review A}, 82(2):024501, 2010.

\bibitem{fast1999infinite}
P.~Fast, M.~S{\'a}nchez, and D.~Truhlar.
\newblock Infinite basis limits in electronic structure theory.
\newblock {\em The Journal of Chemical Physics}, 111(7):2921--2926, 1999.

\bibitem{feller1993use}
D.~Feller.
\newblock The use of systematic sequences of wave functions for estimating the
  complete basis set, full configuration interaction limit in water.
\newblock {\em The Journal of Chemical Physics}, 98(9):7059--7071, 1993.

\bibitem{field1990combined}
M.~Field, P.~Bash, and M.~Karplus.
\newblock A combined quantum mechanical and molecular mechanical potential for
  molecular dynamics simulations.
\newblock {\em Journal of Computational Chemistry}, 11(6):700--733, 1990.

\bibitem{friesecke2003multiconfiguration}
G.~Friesecke.
\newblock The multiconfiguration equations for atoms and molecules: charge
  quantization and existence of solutions.
\newblock {\em Archive for Rational Mechanics and Analysis}, 169(1):35--71,
  2003.

\bibitem{Gerstner.Griebel:1998}
T.~Gerstner and M.~Griebel.
\newblock Numerical integration using sparse grids.
\newblock {\em Numer. Algorithms}, 18:209--232, 1998.

\bibitem{gerstner2003dimension}
T.~Gerstner and M.~Griebel.
\newblock Dimension--adaptive tensor--product quadrature.
\newblock {\em Computing}, 71(1):65--87, 2003.

\bibitem{Goedecker1999}
S.~Goedecker.
\newblock Linear scaling electronic structure methods.
\newblock {\em Reviews of Modern Physics}, 71(4):1085--1123, 1999.

\bibitem{gora2011interaction}
U.~G{\'o}ra, R.~Podeszwa, W.~Cencek, and K.~Szalewicz.
\newblock Interaction energies of large clusters from many-body expansion.
\newblock {\em The Journal of Chemical Physics}, 135(22):224102, 2011.

\bibitem{gordon2012fragmentation}
M.~Gordon, D.~Fedorov, S.~Pruitt, and L.~Slipchenko.
\newblock Fragmentation methods: a route to accurate calculations on large
  systems.
\newblock {\em Chemical Reviews}, 112(1):632--672, 2012.

\bibitem{GreshClaverie1984}
N.~Gresh, P.~Claverie, and A.~Pullman.
\newblock Theoretical studies of molecular conformation. {D}erivation of an
  additive procedure for the computation of intramolecular interaction
  energies. {C}omparison with ab-initio {SCF} computations.
\newblock {\em Theoretica Chimica Acta}, 66:1--20, 1984.

\bibitem{griebel2007sparse}
M.~Griebel and J.~Hamaekers.
\newblock Sparse grids for the {S}chr{\"o}dinger equation.
\newblock {\em ESAIM: Mathematical Modelling and Numerical Analysis},
  41(02):215--247, 2007.

\bibitem{griebel2010tensor}
M.~Griebel and J.~Hamaekers.
\newblock Tensor product multiscale many-particle spaces with finite-order
  weights for the electronic {S}chr{\"o}dinger equation.
\newblock {\em Zeitschrift f{\"u}r Physikalische Chemie}, 224(3-4):527--543,
  2010.

\bibitem{griebel2014bond}
M.~Griebel, J.~Hamaekers, and F.~Heber.
\newblock A bond order dissection {ANOVA} approach for efficient electronic
  structure calculations.
\newblock In {\em Extraction of Quantifiable Information from Complex Systems},
  pages 211--235. Springer, 2014.

\bibitem{Griebel.Harbrecht:2014}
M.~Griebel and H.~Harbrecht.
\newblock On the convergence of the combination technique.
\newblock In {\em Sparse grids and Applications}, volume~97 of {\em Lecture
  Notes in Computational Science and Engineering}, pages 55--74. Springer,
  2014.

\bibitem{Griebel2007a}
M.~Griebel, S.~Knapek, and G.~Zumbusch.
\newblock {\em {Numerical Simulation in Molecular Dynamics -- Numerics,
  Algorithms, Parallelization, Applications}}.
\newblock Springer-Verlag, Heidelberg, 2007.

\bibitem{halkier1998basis}
A.~Halkier, T.~Helgaker, P.~J{\o}rgensen, W.~Klopper, H.~Koch, J.~Olsen, and
  A.~Wilson.
\newblock Basis-set convergence in correlated calculations on {N}e,
  {{N}}$_{2}$, and {H}$_2${O}.
\newblock {\em Chemical Physics Letters}, 286(3):243--252, 1998.

\bibitem{Hamaekers:2010}
J.~Hamaekers.
\newblock {\em {Sparse Grids for the Electronic Schr{\"o}dinger Equation:
  Construction and Application of Sparse Tensor Product Multiscale
  Many-Particle Spaces with Finite-Order Weights for Schr{\"o}dinger's
  Equation}}.
\newblock S{\"u}dwestdeutscher Verlag f{\"u}r Hochschulschriften,
  Saarbr{\"u}cken, 2010.

\bibitem{Hayes2006}
M.~Y. Hayes, B.~Li, and H.~Rabitz.
\newblock Estimation of molecular properties by high-dimensional model
  representation.
\newblock {\em Journal of Physical Chemistry}, 110:264--272, 2006.

\bibitem{molecuilder}
F.~Heber.
\newblock {MoleCuilder - a molecular builder}.
\newblock \url{https://trac.ins.uni-bonn.de/projects/molecuilder}.

\bibitem{Heber2014}
F.~Heber.
\newblock {\em {Ein systematischer, linear skalierender Fragmentansatz f\"{u}r
  das Elektronenstukturproblem}}.
\newblock PhD thesis, Intitut f\"ur Numerische Simulation, Rheinische
  Friedrich-Wilhelms-Universit\"{a}t Bonn, 2014.

\bibitem{helgaker1997basis}
T.~Helgaker, W.~Klopper, H.~Koch, and J.~Noga.
\newblock Basis-set convergence of correlated calculations on water.
\newblock {\em The Journal of Chemical Physics}, 106(23):9639--9646, 1997.

\bibitem{huang2011quantum}
C.~Huang, M.~Pavone, and E.~Carter.
\newblock Quantum mechanical embedding theory based on a unique embedding
  potential.
\newblock {\em The Journal of Chemical Physics}, 134(15):154110, 2011.

\bibitem{janssen2004massively}
C.~Janssen, I.~Nielsen, M.~Leininger, E.~Valeev, and E.~Seidl.
\newblock The {M}assively {P}arallel {Q}uantum {C}hemistry {P}rogram ({MPQC}).
\newblock {\em Sandia National Laboratories, Livermore, CA, USA}, 2004.

\bibitem{jensen2005estimating}
F.~Jensen.
\newblock Estimating the {Hartree-Fock} limit from finite basis set
  calculations.
\newblock {\em Theoretical Chemistry Accounts}, 113(5):267--273, 2005.

\bibitem{jensen2013introduction}
F.~Jensen.
\newblock {\em Introduction to Computational Chemistry}.
\newblock John Wiley \& Sons, 2013.

\bibitem{jurevcka2006benchmark}
P.~Jure{\v{c}}ka, J.~{\v{S}}poner, J.~{\v{C}}ern{\`y}, and P.~Hobza.
\newblock Benchmark database of accurate ({MP2} and {CCSD (T)} complete basis
  set limit) interaction energies of small model complexes, {DNA} base pairs,
  and amino acid pairs.
\newblock {\em Physical Chemistry Chemical Physics}, 8(17):1985--1993, 2006.

\bibitem{Kitaura1999}
K.~Kitaura, E.~Ikeo, T.~Asada, T.~Nakano, and M.~Uebayasi.
\newblock {Fragment molecular orbital method: an approximate computational
  method for large molecules}.
\newblock {\em Chemical Physics Letters}, 313:701--706, 1999.

\bibitem{kohn1996density}
W.~Kohn.
\newblock Density functional and density matrix method scaling linearly with
  the number of atoms.
\newblock {\em Physical Review Letters}, 76(17):3168, 1996.

\bibitem{komeiji2003fragment}
Y.~Komeiji, T.~Nakano, K.~Fukuzawa, Y.~Ueno, Y.~Inadomi, T.~Nemoto,
  M.~Uebayasi, D.~Fedorov, and K.~Kitaura.
\newblock Fragment molecular orbital method: application to molecular dynamics
  simulation, ab initio {FMO-MD}.
\newblock {\em Chemical Physics Letters}, 372(3):342--347, 2003.

\bibitem{kreusler2012mixed}
H.-C. Kreusler and H.~Yserentant.
\newblock The mixed regularity of electronic wave functions in fractional order
  and weighted {S}obolev spaces.
\newblock {\em Numerische Mathematik}, 121(4):781--802, 2012.

\bibitem{lao2016understanding}
K.~Lao, K.-Y. Liu, R.~Richard, and J.~Herbert.
\newblock Understanding the many-body expansion for large systems. {II.
  Accuracy} considerations.
\newblock {\em The Journal of Chemical Physics}, 144(16):164105, 2016.

\bibitem{lubich2008quantum}
C.~Lubich.
\newblock {\em {From Quantum to Classical Molecular Dynamics: Reduced Models
  and Numerical Analysis}}.
\newblock European Mathematical Society, 2008.

\bibitem{martin1999ab}
J.~Martin.
\newblock Ab initio thermochemistry beyond chemical accuracy for first-and
  second-row compounds.
\newblock In {\em Energetics of Stable Molecules and Reactive Intermediates},
  pages 373--415. Springer, 1999.

\bibitem{MarxHutter2000}
D.~Marx and J.~Hutter.
\newblock Ab initio molecular dynamics: Theory and implementation.
\newblock In {\em {Modern Methods and Algorithms of Quantum Chemistry}},
  volume~1 of {\em NIC Series}, pages 301--440. Forschungszentrum Juelich,
  Deutschland, 2000.

\bibitem{MaserasMorokuma1995}
F.~Maseras and K.~Morokuma.
\newblock {IMOMM} - a new integrated ab-initio plus molecular mechanics
  geometry optimization scheme of equilibrium structures and transition-states.
\newblock {\em Journal of Computational Chemistry}, 16(9):1170--1179, 1995.

\bibitem{mayhall2011molecules}
N.~Mayhall and K.~Raghavachari.
\newblock Molecules-in-molecules: An extrapolated fragment-based approach for
  accurate calculations on large molecules and materials.
\newblock {\em Journal of Chemical Theory and Computation}, 7(5):1336--1343,
  2011.

\bibitem{nakano2001multiscale}
A.~Nakano, M.~Bachlechner, R.~Kalia, E.~Lidorikis, P.~Vashishta, G.~Voyiadjis,
  T.~Campbell, S.~Ogata, and F.~Shimojo.
\newblock Multiscale simulation of nanosystems.
\newblock {\em Computing in Science \& Engineering}, 3(4):56--66, 2001.

\bibitem{nobile2015convergence}
F.~Nobile, L.~Tamellini, and R.~Tempone.
\newblock Convergence of quasi-optimal sparse-grid approximation of
  {H}ilbert-space-valued functions: application to random elliptic {PDEs}.
\newblock {\em Numerische Mathematik}, pages 1--46, 2015.

\bibitem{ohlinger2009efficient}
W.~Ohlinger, P.~Klunzinger, B.~Deppmeier, and W.~Hehre.
\newblock Efficient calculation of heats of formation.
\newblock {\em The Journal of Physical Chemistry A}, 113(10):2165--2175, 2009.

\bibitem{peterson2002accurate}
K.~Peterson and T.~Dunning.
\newblock Accurate correlation consistent basis sets for molecular
  core--valence correlation effects: The second row atoms {Al--Ar}, and the
  first row atoms {B--Ne} revisited.
\newblock {\em The Journal of Chemical Physics}, 117(23):10548--10560, 2002.

\bibitem{peterson1994benchmark}
K.~Peterson, D.~Woon, and T.~Dunning.
\newblock Benchmark calculations with correlated molecular wave functions.
  {IV.} the classical barrier height of the {$H + H_2 \to H_2 + H$} reaction.
\newblock {\em The Journal of Chemical Physics}, 100(10):7410--7415, 1994.

\bibitem{praprotnik2008multiscale}
M.~Praprotnik, L.~Site, and K.~Kremer.
\newblock Multiscale simulation of soft matter: From scale bridging to adaptive
  resolution.
\newblock {\em Annu. Rev. Phys. Chem.}, 59:545--571, 2008.

\bibitem{prodan2005nearsightedness}
E.~Prodan and W.~Kohn.
\newblock Nearsightedness of electronic matter.
\newblock {\em Proceedings of the National Academy of Sciences of the United
  States of America}, 102(33):11635--11638, 2005.

\bibitem{richard2012generalized}
R.~Richard and J.~Herbert.
\newblock A generalized many-body expansion and a unified view of
  fragment-based methods in electronic structure theory.
\newblock {\em The Journal of Chemical Physics}, 137(6):064113, 2012.

\bibitem{richard2014understanding}
R.~Richard, K.~Lao, and J.~Herbert.
\newblock Understanding the many-body expansion for large systems. {I.
  Precision} considerations.
\newblock {\em The Journal of Chemical Physics}, 141(1):014108, 2014.

\bibitem{schneider2009analysis}
R.~Schneider.
\newblock Analysis of the projected coupled cluster method in electronic
  structure calculation.
\newblock {\em Numerische Mathematik}, 113(3):433--471, 2009.

\bibitem{senn2009qm}
H.~Senn and W.~Thiel.
\newblock {QM/MM} methods for biomolecular systems.
\newblock {\em Angewandte Chemie International Edition}, 48(7):1198--1229,
  2009.

\bibitem{shimojo2008divide}
F.~Shimojo, R.~Kalia, A.~Nakano, and P.~Vashishta.
\newblock Divide-and-conquer density functional theory on hierarchical
  real-space grids: Parallel implementation and applications.
\newblock {\em Physical Review B}, 77(8):085103, 2008.

\bibitem{Skylaris2005}
C.-K. Skylaris, P.~Haynes, A.~Mostofi, and M.~Payne.
\newblock {Introducing ONETEP: Linear-scaling density functional simulations on
  parallel computers.}
\newblock {\em Journal of Chemical Physics}, 122(8):84119, 2005.

\bibitem{Tersoff1989}
J.~Tersoff.
\newblock Modeling solid-state chemistry: Interatomic potentials for
  multicomponent systems.
\newblock {\em Physical Review B}, 39:5566--5568, 1989.

\bibitem{truhlar1998basis}
D.~Truhlar.
\newblock Basis-set extrapolation.
\newblock {\em Chemical Physics Letters}, 294(1):45--48, 1998.

\bibitem{tschumper2006multicentered}
G.~Tschumper.
\newblock Multicentered integrated {QM:QM} methods for weakly bound clusters:
  An efficient and accurate 2-body: many-body treatment of hydrogen bonding and
  van der {W}aals interactions.
\newblock {\em Chemical Physics Letters}, 427(1):185--191, 2006.

\bibitem{Vreven2000}
T.~Vreven and K.~Morokuma.
\newblock On the application of the {IMOMO} (integrated molecular orbital +
  molecular orbital) method.
\newblock {\em Journal of Computational Chemistry}, 21(16):1419--1432, 2000.

\bibitem{wang2014divide}
L.-W. Wang.
\newblock Divide-and-conquer quantum mechanical material simulations with
  exa\-scale supercomputers.
\newblock {\em National Science Review}, 1(4):604--617, 2014.

\bibitem{wen2012practical}
S.~Weno, K.~Nanda, Y.~Huang, and G.~Beran.
\newblock Practical quantum mechanics-based fragment methods for predicting
  molecular crystal properties.
\newblock {\em Physical Chemistry Chemical Physics}, 14(21):7578--7590, 2012.

\bibitem{wilson1996gaussian}
A.~Wilson, T.~van Mourik, and T.~Dunning.
\newblock Gaussian basis sets for use in correlated molecular calculations.
  {VI}. {S}extuple zeta correlation consistent basis sets for boron through
  neon.
\newblock {\em Journal of Molecular Structure: THEOCHEM}, 388:339--349, 1996.

\bibitem{yang1995density}
W.~Yang and T.-S. Lee.
\newblock A density-matrix divide-and-conquer approach for electronic structure
  calculations of large molecules.
\newblock {\em The Journal of Chemical Physics}, 103(13):5674--5678, 1995.

\bibitem{yserentant2014regularity}
H.~Yserentant.
\newblock Regularity, complexity, and approximability of electronic
  wavefunctions.
\newblock In {\em Extraction of Quantifiable Information from Complex Systems},
  pages 413--428. Springer, 2014.

\bibitem{yserentant2010regularity}
Harry Yserentant.
\newblock {\em Regularity and Approximability of Electronic Wave Functions}.
\newblock Springer, 2010.

\bibitem{zeiser2012wavelet}
A.~Zeiser.
\newblock Wavelet approximation in weighted {S}obolev spaces of mixed order
  with applications to the electronic {S}chr{\"o}dinger equation.
\newblock {\em Constructive Approximation}, 35(3):293--322, 2012.

\bibitem{rezac2009multilevel}
J.~Řezáč and D.~Salahub.
\newblock Multilevel fragment-based approach {(MFBA)}: a novel hybrid
  computational method for the study of large molecules.
\newblock {\em Journal of Chemical Theory and Computation}, 6(1):91--99, 2009.

\end{thebibliography}

\end{document}